\newtheorem{pro}{Proposition}[section]
\newtheorem{teo}[pro]{Theorem}
\newtheorem{defi}[pro]{Definition}
\newtheorem{lem}[pro]{Lemma}
\newtheorem{cor}[pro]{Corollary}
\newtheorem{remark}[pro]{Remark}
\newcommand{\pd}{{\mathrm{pd}}}
\newcommand{\mini}{{\mathrm{min}}}
\newcommand{\maxi}{{\mathrm{max}}}
\newcommand{\modu}{{\mathrm{mod}}}
\newcommand{\Ker}{{\mathrm{Ker}}}
\newcommand{\Coker}{{\mathrm{Coker}}}
\newcommand{\proj}{{\mathrm{proj}}}
\newcommand{\inj}{{\mathrm{inj}}}
\newcommand{\findim}{{\mathrm{fin.dim}}}
\newcommand{\gldim}{{\mathrm{gl.dim}}}
\newcommand{\Hom}{{\mathrm{Hom}}}
\newcommand{\rk}{{\mathrm{rk}}}
\newcommand{\End}{{\mathrm{End}}}
\newcommand{\Ext}{{\mathrm{Ext}}}
\newcommand{\Fidim}{{\phi\,\mathrm{dim}}}
\newcommand{\E}{{\mathcal{E}}}
\newcommand{\Z}{{\mathbb{Z}}}
\newcommand{\N}{{\mathbb{N}}}
\newcommand{\C}{{\mathcal{C}}}
\newcommand{\add}{{\mathrm{add}}}
\newcommand{\D}{{\mathrm{D}}}
\newcommand{\K}{{\mathrm{K}}}
\newcommand{\Com}{{\mathrm{C}}}
\newenvironment{dem}{\noindent {\bf Proof.}}{\hfill $\Box$\\}
\begin{document}
\title{ The $\phi$-dimension: A new homological measure}
\thanks{{\it{$2010$ Mathematics Subject Classifications.}} Primary 16E10. Secondary 16E35.\\
{\it{Keys words:}} finitistic dimension, Igusa-Todorov functions, derived categories}
\author{ S\^onia Fernandes,\\ Marcelo Lanzilotta,\\ Octavio Mendoza}
\date{}

\begin{abstract} In \cite{IT}, K. Igusa and G. Todorov introduced two functions $\phi$ and $\psi,$ 
which are natural and important homological measures generalising the notion of the projective 
dimension. These Igusa-Todorov functions have become into a powerful tool to understand better 
the finitistic dimension conjecture. 
\

In this paper, for an artin $R$-algebra $A$ and the Igusa-Todorov function $\phi,$ we characterise 
the $\phi$-dimension of $A$ in terms either of the bi-functors $\Ext^{i}_{A}(-, -)$ or Tor's bi-functors $\mathrm{Tor}^{A}_{i}(-,-).$ Furthermore, by using the first
characterisation of the $\phi$-dimension, we show that the finiteness of the $\phi$-dimension 
of an artin algebra is invariant under derived equivalences. As an application of this result, we 
generalise the classical Bongartz's result \cite[Corollary 1]{B} as follows: For an artin algebra 
$A,$ a tilting $A$-module $T$ and the endomorphism algebra $B=\End_A(T)^{op},$ we have that 
$\Fidim\,(A)-\pd\,T\leq \Fidim\,(B)\leq \Fidim\,(A)+\pd\,T.$
\end{abstract}

\maketitle
\section{Introduction}

In this paper we shall consider artin $R$-algebras and finitely generated left mo\-du\-les. For an artin algebra $A,$ we denote by $\modu\,(A)$ the category of finitely ge\-ne\-ra\-ted left $A$-modules. Furthermore, $\proj\,(A)$ denotes the class of finitely generated projective $A$-modules and
$\inj\,(A)$ denotes the class of finitely generated injective $A$-modules. Moreover, $\pd\,M$ stands for the projective dimension of any $M\in\modu\,(A).$ We recall that
 $$\findim\,(A):=\sup\{\pd\,M\;:\;M\in\modu\,(A)\text{ and } \pd\,M<\infty\}$$ is the so-called finitistic dimension of $A,$ and also that $$\gldim\,(A):=\sup\{\pd\,M\;:\;M\in\modu\,(A)\}$$ is the global dimension of $A.$ The interest in the finitistic dimension is because of the
 ``finitistic dimension conjecture", which is still open, and states that {\it{the finitistic dimension
 of any artin algebra is finite.}} This conjecture is closely related with several homological conjectures, and therefore it is a centrepiece for the development of the representation theory of artin algebras. The reader could see in \cite {HZ}, \cite{XiXu}, and references therein, for the development related with the finitistic dimension conjecture.
\

In \cite{IT}, K. Igusa and G. Todorov defined two functions, denoted by
$\phi$ and $\psi$, from objects in the category $\modu\,(A)$ to the natural numbers $\mathbb{N}.$ These 
Igusa-Todorov functions determine new homological measures, generalising the notion of projective dimension, 
and have become into a powerful tool to understand better the finitistic dimension conjecture. A lot of new 
ideas have been developed around the use of Igusa-Todorov functions \cite{HL,HLM,HLM2,HLM3,W,Wei,Xi1,Xi2,Xi3}. 
Recently, J. Wei  introduced in \cite{Wei} the 
notion of Igusa-Todorov algebras, which behaves nicely into the setting of Igusa-Todorov functions. In 
particular, Igusa-Todorov algebras have finite finitistic dimension. The class of Igusa-Todorov algebras 
contain many others, for example: algebras of representation dimension at most 3, algebras with radical cube 
zero, monomial algebras and left serial algebras. In fact, it is expected that all artin algebras are 
Igusa-Todorov.
\

The notion and the use of the Igusa-Todorov functions has been expanding to other settings, for example into 
the bounded derived category \cite{Xu} and finite dimensional co-modules for left semi-perfect co-algebras \cite{HaLM}. Our intention, in this paper, is to develop the theory of Igusa-Todorov 
functions for artin $R$-algebras. In particular, we try to convince the readers that these functions are natural and important 
homological measures. In this paper, we are dealing with the $\phi$-dimension.
\

Following \cite{HL} and \cite{HLM}, we recall that the $\phi$-dimension of an artin $R$-algebra $A$ is
$$\Fidim\,(A):=\sup\,\{\phi\,(M)\ :\ M\in\modu\,(A)\}.$$ Since the function $\phi$ is a refinement of the
projective dimension (see {\cite{IT}), we have   
$$\findim\,(A)  \leq \Fidim\,(A) \leq \gldim\,(A).$$ On the other hand,
 F. Huard and M. Lanzilotta proved in \cite{HL} that the Igusa-Todorov functions characterise self-injective algebras.
 They provided, in \cite{HL}, an example of an algebra $A$ showing that the global dimension of $A$ is not enough to
 determine whether $A$ is self-injective or not; and moreover, in such example they got that
 $\findim\,(A) < \Fidim\,(A) < \gldim\,(A).$ Therefore, by taking into account the previous
 discussion, we establish the following ``$\phi$-dimension conjecture": 
 \begin{center}{\it{The
 $\phi$-dimension of any artin algebra is finite.}}\end{center}
 Observe that, the 
 $\phi$-dimension conjecture implies the finitistic dimension conjecture; and hence it could be used as a tool to deal with the finitistic dimension conjecture.
\

In \cite{PX}, S. Pan and C. Xi proved that the finiteness of the finitistic dimension of left
coherent rings is preserved under derived equivalences. Inspired by \cite{PX} and \cite{K}, we
get in Section 4 the same result for the $\phi$-dimension of artin algebras (see Theorem \ref{tititi}). This result could be used in order to get a better understanding of both conjectures.
\

In Section 2, we introduce the necessary facts and notions needed for the de\-ve\-lo\-ping of the paper. In Section 3, we relate the  $\phi$-dimension, of an artin algebra $A,$ with the bi-functors $\Ext^{i}_{A}(-, -).$ In order to do that, we use the well known Auslander-Reiten formulas and Yoneda's Lemma. Here the main result is the Theorem \ref{FidimExt} and it has been the  main ingredient for the proof of Theorem \ref{tititi}.
\

In Section 4, we give the proof of the invariance (under derived equivalences) of the finiteness of the $\phi$-dimensions of artin algebras. That is, we prove (see Theorem \ref{tititi}) the following: If two artin algebras $A$ and $B$ are derived equivalent, then $\Fidim\,(A)< \infty$ if and only if $\Fidim\,(B) < \infty$.
More precisely, if $T^{\bullet}$ is a tilting complex over $A,$ with $n$ non-zero terms, such that $B\simeq\End_{D(\modu\,(A))}(T^{\bullet})^{op}$, then $\Fidim\,(A)-n \leq \Fidim\,(B) \leq \Fidim\,(A)+ n $. In particular (see Corollary \ref{coroPdT}), if $T^{\bullet}$ is given by a tilting module $T\in\modu\,(A)$ then $\Fidim\,(A)-\pd\,T \leq \Fidim\,(B) \leq \Fidim\,(A)+ \pd\,T $. Observe that this corollary is a generalisation of the classical Bongartz's result  \cite[Corollary 1]{B}. 

\section{Preliminaries}

Let $A$ be an artin $R$-algebra. We recall that
$\underline{\modu}\,(A)$ is the stable $R$-category modulo projectives,
whose objects are the same as in $\modu\,(A)$ and the morphisms are
given by
$\underline{\Hom}_A(M,\,N):=\Hom_A(M,\,N)/\mathcal{P}(M,\,N),$ where
$\mathcal{P}(M,\,N)$ is the $R$-submodule of $\Hom_A(M,\,N)$
consisting of the morphisms $M\to N$ factoring through objects in
$\proj\,(A).$ Similarly, we have the stable $R$-category modulo
injectives $\overline{\modu}\,(A)$ and the Auslander-Reiten translation
$\tau:\underline{\modu}\,(A)\to\overline{\modu}\,(A),$ which is an
$R$-equivalence of categories (see, for example, \cite{ARS}).
\

We start now by recalling the definition of the Igusa-Todorov
function $\phi:\mathrm{Obj}\,(\modu\,(A))\to\mathbb{N}.$ Let $\K(A)$ denote the quotient of
the free abelian group generated
by the set of iso-classes $\{[M]\;:\;M \in \modu(A)\}$
modulo the relations: (a) $[N]-[S]-[T]$ if $N \simeq S\oplus T$ and
(b) $[P]$ if $P$ is projective. Therefore, $\K(A)$ is the free
abelian group generated by the iso-classes of finitely generated
indecomposable non-projective $A$-modules. \

The syzygy functor $\Omega:\underline{\modu}\,(A)\to
\underline{\modu}\,(A)$ gives rise to a group homomorphism $\Omega:
\K(A) \to \K(A),$ where $\Omega([M]):=[\Omega(M)].$ Let $\langle M
\rangle$ denote the $\Z$-submodule of $\K(A)$ generated by the
indecomposable non-projective direct summands of $M.$ Since the rank
of $\Omega (\langle M \rangle)$ is less or equal than the rank of
$\langle M \rangle,$ which is finite, it follows from the well
ordering principle that there exists the smallest non-negative
integer $\phi (M)$ such  that $\Omega:\Omega^n(\langle M \rangle)\to
\Omega^{n+1}(\langle M \rangle)$ is an isomorphism for all $n\geq
\phi (M).$ Observe that $\phi(M)$ is always finite, whereas the projective dimension
$\pd\,M$ could be infinite.
\

The main properties of the Igusa-Todorov function $\phi$ are
summarised below.

\begin{lem}\label{itphi} \cite{IT, HLM} Let $A$ be an artin $R$-algebra and $M,N \in \modu(A).$ Then, the following
statements hold.
\begin{itemize}
\item[(a)] $\phi\,(M)=\pd\, M $ if $\pd\, M<\infty.$
\item[(b)] $\phi\,(M)=0$ if $M$ is indecomposable and $\pd\, M=\infty.$
\item[(c)] $\phi\,(M)\leq \phi\,(N\oplus M)$.
\item[(d)] $\phi\,(M)=\phi\,(N)$ if $\add\,(M)=\add\,(N).$
\item[(e)] $\phi\,(M\oplus P)=\phi\,(M)$ for any $P\in\proj\,(A).$
\item[(f)] $\phi(M) \leq \phi(\Omega\, M) +1.$
\end{itemize}
\end{lem}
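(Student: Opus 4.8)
The plan is to deduce all six statements from two elementary observations about the group $\K(A)$ together with the additive endomorphism $\Omega$. The first is that, for any finitely generated subgroup $H\subseteq\K(A)$, the map $\Omega\colon\Omega^{n}(H)\to\Omega^{n+1}(H)$ is surjective by construction, so it is an isomorphism precisely when it is injective; moreover the ranks $\rk\,\Omega^{n}(H)$ form a non-increasing sequence of non-negative integers, hence they stabilise, and $\phi(M)$ is exactly the least index from which the sequence $\bigl(\rk\,\Omega^{n}\langle M\rangle\bigr)_{n\ge 0}$ is constant. The second observation (a restriction lemma) is that if $H\subseteq H'$ are finitely generated subgroups of $\K(A)$ and $\Omega\colon\Omega^{n}(H')\to\Omega^{n+1}(H')$ is injective for all $n\ge s$, then the same holds with $H$ in place of $H'$: indeed $\Omega^{n}(H)\subseteq\Omega^{n}(H')$ and the restriction of an injective map is injective, which together with the automatic surjectivity yields isomorphisms, so $\phi$ evaluated on the smaller object is at most $s$.

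For (a), write $M\cong N\oplus P$ with $P$ projective and $N$ without projective summands (Krull--Schmidt), so $\langle M\rangle=\langle N\rangle$ is generated by the classes $[X_{1}],\dots,[X_{k}]$ of the indecomposable non-projective summands of $N$. Then $\Omega^{n}\langle M\rangle=\sum_{i}\Z[\Omega^{n}X_{i}]$ vanishes if and only if every $\Omega^{n}X_{i}$ is projective, i.e.\ if and only if $n\ge\pd\,M$. Hence the non-increasing rank sequence is positive for $n<\pd\,M$ and zero for $n\ge\pd\,M$, so it stabilises exactly at $n=\pd\,M$, giving $\phi(M)=\pd\,M$ (the case $\pd\,M=0$ being trivial). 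For (b), if $M$ is indecomposable with $\pd\,M=\infty$ then $\langle M\rangle=\Z[M]$ and every syzygy $\Omega^{n}M$ is non-projective, so $\Omega^{n}\langle M\rangle=\Z[\Omega^{n}M]$ has rank $1$ for all $n$; the rank sequence is constant from the start, whence $\phi(M)=0$.

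Statements (c)--(f) then follow from the restriction lemma once the relevant inclusions of the groups $\langle-\rangle$ are recorded. For (c) one uses $\langle M\rangle\subseteq\langle M\oplus N\rangle$. For (d), $\add\,M=\add\,N$ forces $M$ and $N$ to have the same indecomposable summands, so $\langle M\rangle=\langle N\rangle$ and the two invariants coincide. For (e), the indecomposable summands of a projective module contribute nothing to the generating set, so $\langle M\oplus P\rangle=\langle M\rangle$. For (f), the key point is the inclusion $\Omega\langle M\rangle\subseteq\langle\Omega M\rangle$: with $M$ as above, $\Omega\langle M\rangle=\sum_{i}\Z[\Omega X_{i}]$ while $\Omega M\cong\bigoplus_{i}\Omega X_{i}$, so each generator $[\Omega X_{i}]$ lies in $\langle\Omega M\rangle$; the restriction lemma applied to $\Omega\langle M\rangle\subseteq\langle\Omega M\rangle$, together with $\Omega^{n}\bigl(\Omega\langle M\rangle\bigr)=\Omega^{n+1}\langle M\rangle$, yields $\phi(M)\le\phi(\Omega M)+1$.

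The routine parts are these inclusions of the $\langle-\rangle$'s and the bookkeeping with ranks; the one place that requires genuine care is (a), where one must verify both that the rank sequence reaches zero exactly at $n=\pd\,M$ and that it is still positive at $n=\pd\,M-1$, which rests on the fact that $\pd\,M$ equals the maximum of the projective dimensions of the indecomposable summands of $M$. I also note that in (f) the inclusion $\Omega\langle M\rangle\subseteq\langle\Omega M\rangle$ can be strict, so one genuinely works through the restriction lemma rather than through an equality of subgroups.
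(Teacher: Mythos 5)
Your proof is correct. The paper itself gives no argument for this lemma (it is quoted from \cite{IT} and \cite{HLM}), and your route --- the Fitting-style observation that $\phi(M)$ is the point where the non-increasing rank sequence $\rk\,\Omega^{n}\langle M\rangle$ stabilises, combined with the restriction lemma for inclusions of finitely generated subgroups of $\K(A)$ and the inclusions $\langle M\rangle\subseteq\langle M\oplus N\rangle$, $\langle M\oplus P\rangle=\langle M\rangle$, $\Omega\langle M\rangle\subseteq\langle\Omega M\rangle$ --- is exactly the standard Igusa--Todorov argument, and the rank characterisation you start from is the same one the paper invokes in the proof of Theorem \ref{FidimExt}.
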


It follows, from the above properties, that $\phi$ is a good refinement of the measure ``projective dimension". Indeed, for modules of finite projective dimension  both homological measures coincides; and in the case of infinite projective dimension, $\phi$ gives a finite number as a measure.

\section{$\phi$-dimension and the bi-functors $\Ext_A^{i}(-,-)$}

Let $A$ be an artin $R$-algebra. For a given $M\in\modu(A),$ the
projective cover of $M$ will be denoted by $\pi_M:P_0(M)\to M.$ We
also denote by $\C_A$ the abelian category of all $R$-functors
$F:\modu(A)\to\modu(R).$ For a given functor $F\in\C_A,$ the
isomorphism class of $F$ will be denoted by $[F],$ e.g.
$[F]:=\{G\in\C_A\;:\;G\simeq F\}.$ \

Finally we denote by $\underline{\mathcal{C}}_A$ the abelian category of all $R$-functors
$F:\underline{\modu}\,(A)\rightarrow\modu (R)$. Similarly, we introduce $\overline{\mathcal{C}}_A$
by using $\overline{\modu}\,(A)$ instead of $\underline{\modu}(A)$.
\

The following proposition  (exception by the item (d)) can be found in \cite[Proposition 1.44]{AB}. The proof 
given there, by M. Auslander and M. Bridger, uses one of the Hilton-Rees results (see \cite[Theorem 2.6]{HR}).  For the sake of completeness and the 
convenience of the reader, we include an elemental proof here, which is based on the Auslander-Reiten formula and Yoneda's Lemma.

\begin{pro} \label{eqExt} Let $A$ be an artin $R$-algebra and  $M,N\in\modu(A).$ Then, the following conditions are equivalent.
\begin{itemize}
 \item[(a)] $\Ext_A^{1}(M, -)\simeq\Ext_A^{1}(N, - )$ in $\C_A.$
 \item[(b)] $M\oplus P_0(N)\simeq N\oplus P_0(M)$ in $\modu(A).$
 \item[(c)] $M\simeq N$ in $\underline{\modu}\,(A).$
 \item[(d)] $[M]=[N]$ in $\K(A).$
\end{itemize}
\end{pro}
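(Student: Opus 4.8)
The plan is to establish the cycle of implications $(b)\Rightarrow(a)\Rightarrow(c)\Rightarrow(b)$ together with the obvious equivalence $(c)\Leftrightarrow(d)$, since two modules are isomorphic in $\underline{\modu}\,(A)$ precisely when their classes agree in $\K(A)$ (indecomposable non-projective summands are cancellable, by Krull-Schmidt, and projective summands are killed on both sides). I would treat $(c)\Leftrightarrow(d)$ as essentially a bookkeeping remark about $\K(A)$ and spend the real effort on the Ext-theoretic equivalences.

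For $(b)\Rightarrow(a)$: from a minimal projective presentation $P_1(M)\to P_0(M)\to M\to 0$ one gets $\Ext^1_A(M,-)$ as the cokernel of $\Hom_A(P_0(M),-)\to\Hom_A(P_1(M),-)$; more conveniently, writing the syzygy sequence $0\to\Omega M\to P_0(M)\to M\to 0$ yields a natural isomorphism $\Ext^1_A(M,-)\simeq\underline{\Hom}_A(\Omega M,-)$ after quotienting by maps factoring through projectives. Then $M\oplus P_0(N)\simeq N\oplus P_0(M)$ gives $\Omega M\simeq\Omega N$ in $\modu\,(A)$ (syzygies of projective summands vanish), hence $\Ext^1_A(M,-)\simeq\Ext^1_A(N,-)$. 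Alternatively one can argue directly: condition (b) says $M$ and $N$ have the same minimal projective cover data up to projective summands, so their deleted projective presentations agree, forcing the Ext functors to coincide.

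For $(a)\Rightarrow(c)$ — which I expect to be the crux — I would use the Auslander-Reiten formula $\Ext^1_A(X,Y)\simeq D\,\overline{\Hom}_A(Y,\tau X)\simeq D\,\underline{\Hom}_A(\tau^{-1}Y,X)$, or more directly the formula $\Ext^1_A(X,-)\simeq \underline{\Hom}_A(\Omega X,-)$ noted above together with Yoneda's Lemma. Concretely, $\Ext^1_A(M,-)\simeq\Ext^1_A(N,-)$ translates into $\underline{\Hom}_A(\Omega M,-)\simeq\underline{\Hom}_A(\Omega N,-)$ as functors on $\underline{\modu}\,(A)$; Yoneda's Lemma then gives $\Omega M\simeq\Omega N$ in $\underline{\modu}\,(A)$. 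Lifting this one syzygy step back up — i.e. deducing $M\simeq N$ in $\underline{\modu}\,(A)$ from $\Omega M\simeq\Omega N$ in $\underline{\modu}\,(A)$ — is the delicate point; I would handle it by choosing minimal projective covers and observing that a stable isomorphism $\Omega M\xrightarrow{\sim}\Omega N$ can be completed (using that projective covers are minimal and the relevant maps factoring through projectives can be absorbed) to an isomorphism of the short exact sequences $0\to\Omega M\to P_0(M)\to M\to 0$ and $0\to\Omega N\to P_0(N)\to N\to 0$ modulo projectives, yielding $M\simeq N$ in $\underline{\modu}\,(A)$ and simultaneously $(b)$. This minimality argument, ensuring the lift is genuinely invertible and not merely a stable map, is where care is needed; the Auslander-Reiten formula is the tool that makes the functorial input rigid enough for Yoneda to bite.

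Finally $(c)\Rightarrow(b)$ follows from the same minimal-presentation analysis: a stable isomorphism $M\simeq N$ means there are maps $f\colon M\to N$, $g\colon N\to M$ with $gf-\id_M$ and $fg-\id_N$ factoring through projectives, and the standard trick of adding the projective covers $P_0(M)$, $P_0(N)$ to both sides turns these into honest isomorphisms, giving $M\oplus P_0(N)\simeq N\oplus P_0(M)$. Thus the four conditions are equivalent.
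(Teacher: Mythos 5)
Your plan for the crux implication (a) $\Rightarrow$ (c) does not work as written, for two compounding reasons. First, the ``formula'' $\Ext_A^{1}(M,-)\simeq\underline{\Hom}_A(\Omega M,-)$ is false: the long exact sequence coming from $0\to\Omega M\to P_0(M)\to M\to 0$ identifies $\Ext_A^{1}(M,X)$ with $\Hom_A(\Omega M,X)$ modulo the maps that \emph{extend to} $P_0(M)$, and a map factoring through an arbitrary projective need not extend to $P_0(M)$; in general one only gets a natural epimorphism $\Ext_A^{1}(M,X)\to\underline{\Hom}_A(\Omega M,X)$. Concretely, if $A$ is hereditary and $M$ is non-projective, then $\Omega M$ is projective, so $\underline{\Hom}_A(\Omega M,-)=0$ while $\Ext_A^{1}(M,-)\neq 0$. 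Second, and more fatally, the step you call ``delicate''--- deducing $M\simeq N$ in $\underline{\modu}\,(A)$ from $\Omega M\simeq\Omega N$ in $\underline{\modu}\,(A)$ --- is not delicate but false, and no minimality argument can repair it: take $M$ non-projective with $\pd M=1$ and $N$ projective, so both syzygies are projective, hence isomorphic to zero in $\underline{\modu}\,(A)$, yet $M\not\simeq N$ there; one can even arrange $\Omega M\simeq\Omega N$ in $\modu\,(A)$ with both $M,N$ stably nonzero (over the Kronecker algebra, a suitable simple and a direct sum of two pairwise non-isomorphic modules of dimension vector $(1,1)$ have the same syzygy). The point is that hypothesis (a) is strictly stronger than stable isomorphism of the syzygies, and your translation through $\underline{\Hom}_A(\Omega(-),-)$ discards exactly the extra information the proposition is about. (For the same reason your first route to (b) $\Rightarrow$ (a) via $\Omega M\simeq\Omega N$ is unjustified; fortunately that implication needs none of this, being immediate from additivity of $\Ext^1_A(-,-)$ and its vanishing on projectives in the first variable.)

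The correct route is the one you mention only in passing, and it is the paper's proof: apply the Auslander--Reiten formula in the second variable, $\Ext_A^{1}(M,-)\simeq D\,\overline{\Hom}_A(-,\tau M)$, so that (a) becomes $D\,\overline{\Hom}_A(-,\tau M)\simeq D\,\overline{\Hom}_A(-,\tau N)$; since $\tau:\underline{\modu}\,(A)\to\overline{\modu}\,(A)$ is an equivalence, this is equivalent to $\underline{\Hom}_A(-,M)\simeq\underline{\Hom}_A(-,N)$ in $\underline{\mathcal{C}}_A$, and Yoneda's Lemma in the stable category gives $M\simeq N$ in $\underline{\modu}\,(A)$ directly --- there is no syzygy shift, hence nothing to lift. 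The remaining pieces of your plan are essentially fine: (c) $\Leftrightarrow$ (d) is the Krull--Schmidt bookkeeping you describe, and (c) $\Rightarrow$ (b) is correct in outline, though the ``standard trick'' is precisely where the paper does real work, writing explicit matrix maps between $M\oplus P_0(N)$ and $N\oplus P_0(M)$ and using right-minimality of the projective covers to check they are mutually inverse up to isomorphism.
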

\begin{dem}(a) $\Leftrightarrow$ (c) We have that $\Ext^1_A(M,-)\simeq\Ext^1_A(N,-)$ in $\mathcal{C}_A$ if and only if, by using Auslander-Reiten
formula, the functors $D\overline{\Hom}_A(-, \tau M)$ and $D\overline{\Hom}_A(-, \tau N)$ are isomorphic in
$\overline{\mathcal{C}}(A)$. Moreover, the latest isomorphism is equivalent to the existence of an isomorphism $\underline{\Hom}_A(-, M)\simeq
\underline{\Hom}_A(-, N)$ in $\underline{\mathcal{C}}_A,$ since the Auslander-Reiten translation
$\tau:\underline{\modu}\,(A)\to\overline{\modu}\,(A)$ is an
$R$-equivalence of categories. Finally, the fact that $\underline{\Hom}_A(-, M)\simeq
\underline{\Hom}_A(-, N)$ in $\underline{\mathcal{C}}_A,$ is equivalent by Yoneda's Lemma to the existence of an isomorphism $M\simeq N$
in $\underline{\modu}\,(A)$.
\

(c) $\Rightarrow$ (b) Let $M\simeq N$ in $\underline{\modu}\,(A).$
Then we have the following diagram in $\modu\,(A)$
$$\xymatrix{  & & P_0(N) \ar[dr]^{\pi_N} &\\
M \ar[r]^\alpha \ar[dr]_{j_M} &  N \ar[ur]^{j_N} \ar[r]^\beta & M \ar[r]^\alpha & N\\
& P_0(M) \ar[ur]_{\pi_M} & &                        }$$ where
$1_M-\beta\alpha=\pi_M j_M$ and $1_N-\alpha\beta=\pi_N j_N.$ Furthermore, since $\pi_M$ and $\pi_N$ are epimorphisms, there exist
morphisms $f_N:P_0(N)\to P_0(M)$ and $f_M:P_0(M)\to P_0(N)$ in
$\modu\,(A)$ satisfying the equalities $\pi_M f_N=\beta\pi_N$ and
$\pi_N f_M=\alpha\pi_M.$ So we get the following diagram in
$\modu\,(A)$
$$M\oplus P_0(N)\stackrel{F}{\longrightarrow}
N\oplus P_0(M)\stackrel{G}{\longrightarrow}M\oplus P_0(N),$$ where
$F:=\begin{pmatrix} \alpha & \pi_N\\ j_M & -f_N
\end{pmatrix}$ and $G:=\begin{pmatrix} \beta & \pi_M\\ j_N & -f_M
\end{pmatrix}.$ We assert that $FG$ and $GF$ are isomorphisms.
Indeed, let $\mu:=j_M\pi_M+f_Nf_M \in \End_A(P_0(M))$ and so, from
the
above equalities,  we have that $FG=\begin{pmatrix}1_N & 0\\
j_M\beta-f_Nj_N & \mu\end{pmatrix}.$ Hence, in order to prove that
$FG$ is an isomorphism, it is enough to check that $\pi_M\mu=\pi_M$
since $\pi_M$ is a right-minimal morphism. That is,
$\pi_M\mu=\pi_Mj_M\pi_M+\pi_Mf_Nf_M=\pi_Mj_M\pi_M+\beta\pi_Nf_M=\pi_Mj_M\pi_M+\beta\alpha\pi_M=
(\pi_Mj_M+\beta\alpha)\pi_M=1_M\pi_M=\pi_M;$ proving that $FG$ is an
isomorphism. Analogously, it can be seen that $GF$ is an
isomorphism.
\

(b) $\Rightarrow$ (c) It is straightforward.
\

(c) $\Leftrightarrow$ (d) It follows from the fact that $\proj\,(A)$ is the iso-class of the zero object in the stable
 category $\underline{\modu}\,(A).$
\end{dem}
\

For an artin $R$-algebra $A$, we denote by $\E(A)$ the quotient of the free abelian group generated by the iso-classes
$[\Ext_A^{1}(M, -)]$ in $\C_A,$ for all $M\in\modu\,(A),$  modulo the relations
$$[\Ext_A^{1}(N, -)] - [\Ext_A^{1}(X, -)] - [\Ext_A^{1}(Y, -)] \quad\text{if}\quad N \simeq X\oplus Y.$$
The syzygy functor $\Omega:\underline{\modu}\,(A)\to \underline{\modu}\,(A)$ gives rise to a group homomorphism $\Omega:
\E(A) \to \E(A),$ given by $\Omega([\Ext_A^{1}(M, -)]):=[\Ext_A^{1}(\Omega\,M, -)].$

\begin{teo} \label{teogrupos} Let $A$ be an artin $R$-algebra. Then, the following statements hold.
 \begin{itemize}
 \item[(a)] The map
$\varepsilon:\K(A)\to \E(A),$ given by $\varepsilon([M]):=
[\Ext_{A}^{1}(M, -)],$ is an isomorphism of abelian groups and the
following diagram is commutative
$$\xymatrix{ \K(A) \ar[r]^\varepsilon \ar[d]_\Omega & \E(A) \ar[d]^\Omega \\
  \K(A) \ar[r]_\varepsilon & \E(A).}$$
 \item[(b)] $\varepsilon(\Omega^n([M]))=[\Ext_A^{n+1}(M,-)]$ for any $n\in\mathbb{N}$ and $M\in\modu\,(A).$
 \end{itemize}
\end{teo}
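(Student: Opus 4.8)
The plan is to exhibit an explicit two-sided inverse $\delta\colon\E(A)\to\K(A)$ of $\varepsilon$, to read off the commutativity of the square on generators, and to deduce $(b)$ from the square together with a dimension shift. The only genuinely non-formal ingredient is Proposition~\ref{eqExt}; everything else amounts to checking that the maps in sight are compatible with the relations defining the two quotient groups.

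First I would check that $\varepsilon$ is a well-defined group homomorphism. On the free abelian group on iso-classes of modules, the assignment $[M]\mapsto[\Ext^1_A(M,-)]$ makes sense because isomorphic modules yield isomorphic functors $\Ext^1_A(M,-)$. It kills the relations defining $\K(A)$: the additivity isomorphism $\Ext^1_A(S\oplus T,-)\simeq\Ext^1_A(S,-)\oplus\Ext^1_A(T,-)$ sends relation $(a)$ into the additivity relations of $\E(A)$, while $\Ext^1_A(P,-)=0$ for $P$ projective, and the class of the zero functor is trivial in $\E(A)$ (since $0\simeq 0\oplus 0$ forces $[\,0\,]=2[\,0\,]$). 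Surjectivity of $\varepsilon$ is immediate, since each generator $[\Ext^1_A(M,-)]$ of $\E(A)$ equals $\varepsilon([M])$.

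The main step is to build $\delta$. On the free abelian group generated by the iso-classes $[\Ext^1_A(M,-)]$ I set $\delta([\Ext^1_A(M,-)]):=[M]\in\K(A)$; this is well defined on generators precisely because, by Proposition~\ref{eqExt} $(a)\Rightarrow(d)$, an isomorphism $\Ext^1_A(M,-)\simeq\Ext^1_A(N,-)$ in $\C_A$ forces $[M]=[N]$ in $\K(A)$. Extending $\delta$ linearly, each relation $[\Ext^1_A(N,-)]-[\Ext^1_A(X,-)]-[\Ext^1_A(Y,-)]$ with $N\simeq X\oplus Y$ is sent to $[N]-[X]-[Y]=0$ by relation $(a)$ of $\K(A)$, so $\delta$ descends to $\E(A)$. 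Then $\delta\varepsilon([M])=\delta([\Ext^1_A(M,-)])=[M]$ and $\varepsilon\delta([\Ext^1_A(M,-)])=\varepsilon([M])=[\Ext^1_A(M,-)]$ on generators, so $\varepsilon$ and $\delta$ are mutually inverse. For the square it suffices to evaluate on a generator $[M]$: both $\varepsilon(\Omega[M])$ and $\Omega(\varepsilon[M])$ equal $[\Ext^1_A(\Omega M,-)]$, by the definition of $\Omega$ on $\K(A)$ and on $\E(A)$ respectively (the well-definedness of the latter being itself a consequence of Proposition~\ref{eqExt} and of $\Omega$ being an endofunctor of $\underline{\modu}(A)$).

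Finally, for $(b)$ I would iterate the commutative square to get $\varepsilon\circ\Omega^{n}=\Omega^{n}\circ\varepsilon$, whence $\varepsilon(\Omega^{n}[M])=\Omega^{n}([\Ext^1_A(M,-)])=[\Ext^1_A(\Omega^{n}M,-)]$, and then identify $\Ext^1_A(\Omega^{n}M,-)$ with $\Ext^{n+1}_A(M,-)$ by dimension shifting: the short exact sequence $0\to\Omega M\to P_0(M)\to M\to 0$ and the vanishing $\Ext^{i}_A(P_0(M),-)=0$ for $i\ge 1$ give natural isomorphisms $\Ext^{i}_A(\Omega M,-)\simeq\Ext^{i+1}_A(M,-)$ for all $i\ge 1$, and an induction on $n$ applied successively to $\Omega^{n-1}M,\dots,M$ yields $\Ext^1_A(\Omega^{n}M,-)\simeq\Ext^{n+1}_A(M,-)$ in $\C_A$. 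I do not expect a serious obstacle: once Proposition~\ref{eqExt} is available the whole statement is a bookkeeping exercise on quotient groups plus a standard shift. The one point deserving care is to define each map first on the ambient free group and only then verify that it annihilates the imposed relations, rather than trying to define it directly on the quotient.
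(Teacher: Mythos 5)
Your proof is correct, and it rests on exactly the same key ingredient as the paper's, namely Proposition \ref{eqExt}; the only real difference is how bijectivity of $\varepsilon$ is organised. The paper proves injectivity directly: it uses that $\K(A)$ is free on the indecomposable non-projective modules, writes an element of $\Ker(\varepsilon)$ as $\sum_i a_i[M_i]$, separates positive and negative coefficients into $X=\bigoplus_{a_i>0}M_i^{a_i}$ and $Y=\bigoplus_{a_i<0}M_i^{-a_i}$ with $\varepsilon([X])=\varepsilon([Y])$, and then invokes Proposition \ref{eqExt} to get $[X]=[Y]$, hence the element is zero. You instead build an explicit two-sided inverse $\delta$ on generators, where the same Proposition \ref{eqExt} (the implication from an isomorphism of functors to equality in $\K(A)$) is what makes $\delta$ well defined; this avoids any appeal to the explicit basis of $\K(A)$ and to the positive/negative splitting, at the small cost of checking that $\delta$ annihilates the relations of $\E(A)$, which you do. The commutativity of the square is verified in both arguments by evaluation on generators, and for (b) the paper simply writes $[\Ext_A^1(\Omega^n M,-)]=[\Ext_A^{n+1}(M,-)]$, leaving implicit the dimension-shift isomorphism $\Ext_A^1(\Omega^n M,-)\simeq\Ext_A^{n+1}(M,-)$ in $\C_A$ that you spell out via the sequences $0\to\Omega^{i+1}M\to P_0(\Omega^i M)\to\Omega^i M\to 0$; your remark that these shift isomorphisms are natural in the second variable is exactly what is needed for the identification to hold in $\C_A$ rather than merely objectwise, so nothing is missing.
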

\begin{dem} (a) Since the bi-functor $\Ext_A^1(-,-)$ commutes
with finite direct sums and takes, in the first variable,
objects of $\proj\,(A)$ to zero, we get that the map $\varepsilon:\K(A)\to \E(A)$ is well defined, surjective
and it is also a morphism of abelian groups. Let
$M,N\in\modu\,(A),$ we assert that the equality $\varepsilon\,([M])=\varepsilon\,([N])$ implies that 
$[M]=[N]$ in $\K(A).$ Indeed, this assertion follows easily from Proposition \ref{eqExt}.
\

We prove now that $\varepsilon$ is a monomorphism. Let $x\in\Ker\,(\varepsilon).$ Since $\K(A)$ is the free 
abelian group with generators $[M],$ for $M\in\underline{\mathrm{ind}}\,(A)$, we have that 
$x=\sum_{i=1}^n\,a_i[M_i]$ and, without lost of generality, it can be assumed that the integer $a_i\neq 0$ for 
any $i.$ Using that $\varepsilon(x)=0,$ we get the equality 
$$\sum_{a_i>0}\,a_i\varepsilon([M_i])=\sum_{a_i<0}\,-a_i\varepsilon([M_i]).$$
Therefore $\displaystyle\varepsilon([\bigoplus_{a_i>0}\,M_i^{a_i}])=\varepsilon([\bigoplus_{-a_i>0}\,M_i^{-a_i}])$ and by the 
assertion above $$\displaystyle[\bigoplus_{a_i>0}\,M_i^{a_i}]=[\bigoplus_{-a_i>0}\,M_i^{-a_i}].$$ Hence 
$x=\sum_{i=1}^n\,a_i[M_i]=0,$ proving that $\varepsilon$ is an isomorphism.
Finally, we have $\varepsilon(\Omega\,([M])=[\Ext_A^1(\Omega\,M,-)]=\Omega(\varepsilon([M]).$
\

(b)
$\varepsilon(\Omega^n([M])=[\Ext_A^1(\Omega^n\,M,-)]=[\Ext_A^{n+1}(M,-)].$
\end{dem}

\begin{cor}\label{coroteogrupos} Let $A$ be an artin $R$-algebra and  $M,N\in\modu(A).$ Then, the following conditions are equivalent.
\begin{itemize}
 \item[(a)] $[\Omega^n M]=[\Omega^n N]$ in $\K(A).$
 \item[(b)] $\Ext_A^{i}(M, -)\simeq\Ext_A^{i}(N, - )$ in $\C_A$ for
 any $i\geq n+1.$
 \item[(c)] $\Ext_A^{n+1}(M, -)\simeq\Ext_A^{n+1}(N, - )$ in $\C_A.$
  \end{itemize}
\end{cor}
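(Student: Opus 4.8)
The plan is to deduce Corollary \ref{coroteogrupos} directly from Theorem \ref{teogrupos} together with Proposition \ref{eqExt}, establishing the cycle (a) $\Rightarrow$ (b) $\Rightarrow$ (c) $\Rightarrow$ (a). The key observation linking everything is part (b) of Theorem \ref{teogrupos}, namely the identity $\varepsilon(\Omega^n([M]))=[\Ext_A^{n+1}(M,-)]$, combined with the fact that $\varepsilon$ is a group isomorphism.

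First I would prove (a) $\Rightarrow$ (b). Assume $[\Omega^n M]=[\Omega^n N]$ in $\K(A)$. Applying $\Omega$ repeatedly to this equality in $\K(A)$ gives $[\Omega^{n+j}M]=[\Omega^{n+j}N]$ for every $j\geq 0$. Now for any fixed $i\geq n+1$, write $i=n+1+j$ with $j\geq 0$; applying the isomorphism $\varepsilon$ to the equality $[\Omega^{i-1}M]=[\Omega^{i-1}N]$ and using Theorem \ref{teogrupos}(b) (with $n$ there replaced by $i-1$) yields $[\Ext_A^{i}(M,-)]=[\Ext_A^{i}(N,-)]$ in $\E(A)$; but equality of iso-classes in $\E(A)$ in particular forces an isomorphism $\Ext_A^{i}(M,-)\simeq\Ext_A^{i}(N,-)$ in $\C_A$. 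The step (b) $\Rightarrow$ (c) is trivial: just take $i=n+1$.

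It remains to show (c) $\Rightarrow$ (a). Suppose $\Ext_A^{n+1}(M,-)\simeq\Ext_A^{n+1}(N,-)$ in $\C_A$. By Theorem \ref{teogrupos}(b) this reads $\varepsilon(\Omega^n([M]))=\varepsilon(\Omega^n([N]))$ in $\E(A)$ (here one uses that isomorphic functors have the same class, hence the same image under the map realizing $\varepsilon$). Since $\varepsilon$ is injective by Theorem \ref{teogrupos}(a), we conclude $\Omega^n([M])=\Omega^n([N])$ in $\K(A)$, i.e. $[\Omega^n M]=[\Omega^n N]$, which is (a).

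I do not anticipate a serious obstacle here: the corollary is essentially a bookkeeping consequence of the isomorphism $\varepsilon$ and the translation formula of Theorem \ref{teogrupos}(b). The one point requiring a little care is the passage between \emph{isomorphism of functors in $\C_A$} and \emph{equality of classes in $\E(A)$}: the class $[\Ext_A^{n+1}(M,-)]$ in $\E(A)$ is defined via the quotient of a free abelian group on iso-classes, so $\Ext_A^{n+1}(M,-)\simeq\Ext_A^{n+1}(N,-)$ in $\C_A$ does give equal generators, hence equal classes in $\E(A)$; conversely, the content of Proposition \ref{eqExt} (specifically (a) $\Leftrightarrow$ (d), applied to syzygies) guarantees that equality of the corresponding classes does reflect back to an honest functor isomorphism, so no information is lost in either direction. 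Once that identification is made explicit, the three implications above close the cycle.
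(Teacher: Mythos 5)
Your argument is correct and is exactly the route the paper intends: the paper's proof of this corollary is just ``it follows from Theorem \ref{teogrupos}'', and you spell out the cycle (a) $\Rightarrow$ (b) $\Rightarrow$ (c) $\Rightarrow$ (a) using the isomorphism $\varepsilon$, the formula $\varepsilon(\Omega^n([M]))=[\Ext_A^{n+1}(M,-)]$, and Proposition \ref{eqExt} ((a) $\Leftrightarrow$ (d)) to pass between functor isomorphisms in $\C_A$ and equalities in $\K(A)$. Your closing remark correctly identifies and resolves the only delicate point, so nothing is missing.
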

\begin{dem} It follows easily from  Theorem \ref{teogrupos}.
\end{dem}

\begin{defi} Let $A$ be an artin $R$-algebra, $d$ be a positive integer and $M$ in $\modu\,(A)$. A pair $(X,Y)$ of objects in $\add\,(M)$ is called a $d$-Division of $M$ if the following three conditions hold: 
\begin{itemize}
\item[(a)] $\add\,(X)\cap\add\,(Y)=\{0\};$
\item[(b)] $\Ext_A^{d}(X,-)\not\simeq \Ext_A^{d}(Y,-)$ in $\C_A;$
\item[(c)] $\Ext_A^{d+1}(X,-)\simeq \Ext_A^{d+1}(Y,-)$ in $\C_A.$
\end{itemize}
\end{defi}

\begin{remark} Observe that  $\phi\,(M)=0$ if and only if for any pair
$(X,Y)$ of objects in $\add\,(M),$ which are not projective and
$\add\,(X)\cap\add\,(Y)=\{0\},$ we have that $\Ext_A^{d}(X,
-)\not\simeq\Ext_A^{d}(Y, - )$ in $\C_A$ for
 any $d\geq 1.$ Thus, in this case, the following set is empty
 $$\{d\in \N\;:\;\text{there is a d-Division of }M \}.$$
\end{remark}

The following result gives a characterization of $\phi\,(M)$ in terms of the bi-functors $\Ext_A^{i}(-, -).$

\begin{teo} \label{FidimExt} Let $A$ be an artin $R$-algebra and $M$ in $\modu\,(A)$. Then
$$\phi\,(M)=\maxi\,(\{d\in\N\;:\;\text{there is a $d$-Division of } M \}\cup \{0\}).$$
\end{teo}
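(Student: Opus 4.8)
The plan is to use Theorem \ref{teogrupos} to translate the definition of $\phi(M)$ — which is phrased in terms of the syzygy action on the subgroup $\langle M\rangle\subseteq\K(A)$ — into the language of $\Ext$-functors, and then show that the existence of a $d$-Division of $M$ is exactly an obstruction to $\Omega$ being an isomorphism from $\Omega^d(\langle M\rangle)$ onto $\Omega^{d+1}(\langle M\rangle)$. Write $n:=\phi(M)$ and let $S$ denote the set $\{d\in\N\;:\;\text{there is a $d$-Division of }M\}$; we must show $\maxi(S\cup\{0\})=n$.

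The first step is to prove $\maxi(S\cup\{0\})\leq n$, equivalently that there is no $d$-Division of $M$ for any $d>n$. Suppose $(X,Y)$ is a $d$-Division of $M$ with $d\geq\phi(M)$. Since $X,Y\in\add(M)$, the elements $[X],[Y]$ lie in $\langle M\rangle$, hence $\Omega^d[X],\Omega^d[Y]\in\Omega^d(\langle M\rangle)$ and $\Omega^{d+1}[X],\Omega^{d+1}[Y]\in\Omega^{d+1}(\langle M\rangle)$. By Corollary \ref{coroteogrupos} (applied with $n=d$, using (c)$\Leftrightarrow$(a)), condition (c) of a $d$-Division says precisely $[\Omega^d X]\neq[\Omega^d Y]$ in $\K(A)$ is \emph{not} forced — rather, let me restate: condition (b) gives $\Ext^d_A(X,-)\not\simeq\Ext^d_A(Y,-)$, which by Corollary \ref{coroteogrupos} (with $n=d-1$) is equivalent to $[\Omega^{d-1}X]\neq[\Omega^{d-1}Y]$; and condition (c) gives $\Ext^{d+1}_A(X,-)\simeq\Ext^{d+1}_A(Y,-)$, equivalent to $[\Omega^{d}X]=[\Omega^{d}Y]$. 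Thus $\Omega$ sends the two distinct elements $\Omega^{d-1}[X]\neq\Omega^{d-1}[Y]$ of $\Omega^{d-1}(\langle M\rangle)$ to the same element of $\Omega^{d}(\langle M\rangle)$, so $\Omega\colon\Omega^{d-1}(\langle M\rangle)\to\Omega^{d}(\langle M\rangle)$ is not injective, hence not an isomorphism. By the definition of $\phi$ this forces $d-1<\phi(M)$, i.e. $d\leq\phi(M)=n$. Hence every $d\in S$ satisfies $d\leq n$, giving $\maxi(S\cup\{0\})\leq n$.

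For the reverse inequality $n\leq\maxi(S\cup\{0\})$ we may assume $n=\phi(M)>0$. By minimality of $\phi(M)$, the map $\Omega\colon\Omega^{n-1}(\langle M\rangle)\to\Omega^{n}(\langle M\rangle)$ is \emph{not} an isomorphism; since its image is by construction $\Omega^{n}(\langle M\rangle)$, it fails to be injective, so there is a nonzero $x\in\Omega^{n-1}(\langle M\rangle)$ with $\Omega(x)=0$. Pulling back, write $x=\sum a_i\,\Omega^{n-1}[M_i]$ with $M_i$ the distinct indecomposable non-projective summands of $M$ and $a_i\in\Z$; split the index set according to the sign of $a_i$ and put $X:=\bigoplus_{a_i>0}M_i^{a_i}$, $Y:=\bigoplus_{a_i<0}M_i^{-a_i}$, which are objects of $\add(M)$ with $\add(X)\cap\add(Y)=\{0\}$. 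Then $x\neq 0$ means $\Omega^{n-1}[X]\neq\Omega^{n-1}[Y]$ in $\K(A)$, and $\Omega(x)=0$ means $\Omega^{n}[X]=\Omega^{n}[Y]$; translating back through Corollary \ref{coroteogrupos} gives $\Ext^{n}_A(X,-)\not\simeq\Ext^{n}_A(Y,-)$ and $\Ext^{n+1}_A(X,-)\simeq\Ext^{n+1}_A(Y,-)$, so $(X,Y)$ is an $n$-Division of $M$. Therefore $n\in S$ and $\maxi(S\cup\{0\})\geq n$, completing the proof. (The case $n=0$ is covered by the Remark, which says $S=\varnothing$ exactly when $\phi(M)=0$.)

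The main technical care needed is the bookkeeping in the second part: one must check that the decomposition $x=\sum a_i\Omega^{n-1}[M_i]$ genuinely comes from a pull-back along the surjection $\langle M\rangle\twoheadrightarrow\Omega^{n-1}(\langle M\rangle)$ and that the resulting $X,Y$ are honestly nonzero and disjoint in $\add$; and throughout one must be scrupulous about the index shift between ``$\Ext^{d}$'' in the definition of a $d$-Division and ``$\Omega^{d-1}$'' in $\K(A)$, which is where an off-by-one error would most easily creep in. Everything else is a direct application of the group isomorphism $\varepsilon$ of Theorem \ref{teogrupos} and of Corollary \ref{coroteogrupos}.
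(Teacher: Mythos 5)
Your proof is correct and follows essentially the same route as the paper: both translate the two $\Ext$-conditions of a $d$-Division into $[\Omega^{d-1}X]\neq[\Omega^{d-1}Y]$ and $[\Omega^{d}X]=[\Omega^{d}Y]$ via Corollary \ref{coroteogrupos}, produce an $n$-Division from a nonzero kernel element of $\Omega$ on $\Omega^{n-1}(\langle M\rangle)$ written in terms of the indecomposable summands of $M$, and rule out $d$-Divisions for $d>n$ because $\Omega$ is an isomorphism on $\Omega^{m}(\langle M\rangle)$ for $m\geq\phi(M)$. Your write-up is in fact slightly more explicit than the paper's on the upper-bound direction, which the paper dispatches in one terse sentence.
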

\begin{dem} Let $n:=\phi\,(M)>0$ and $M=\oplus_{i=1}^t\,M_i^{m_i}$ be a direct sum decomposition of $M,$ where
$M_i\not\simeq M_j$ for $i\neq j$ and $M_i$ is indecomposable for any
$i.$ Since $n$ is the first moment from which the rank of each free
abelian group of the family $\{\Omega^j(\langle M\rangle)\;:\;j\in\N\}$ becomes
stable, e.g. $$\phi\,(M)=\mini\,\{m\in\N\;:\;\rk\,\Omega^j(\langle M\rangle)=\rk\,\Omega^m(
\langle M\rangle)\;\forall\,j\geq m\},$$ we get the existence of natural numbers
$\alpha_1,\cdots,\alpha_t$ (not all zero) and a partition
$\{1,2,\cdots, t\}=I\uplus J$ such that $\sum_{i\in I}\alpha_i[\Omega^n M_i]=\sum_{j\in J}\alpha_j[\Omega^n M_j],$ and furthermore\\
$\sum_{i\in I}\alpha_i[\Omega^{n-1} M_i]\neq \sum_{j\in J}\alpha_j[\Omega^{n-1} M_j].$

Hence, by Corollary \ref{coroteogrupos}, it follows that the pair $(X,Y),$ with $X:=\oplus_{i\in I}M_i^{\alpha_i}$ and
$Y:=\oplus_{j\in J}M_j^{\alpha_j},$ is an $n$-Division of $M.$ Moreover, by using the fact that
$\rk\,\Omega^j(\langle M\rangle)>\rk\,\Omega^n(\langle M\rangle)$ for $j=0,1,\cdots, n-1,$ we get the result.
\end{dem}

For any  non-negative integer $i$ and any $M,N\in\modu\,(A),$ it is well known \cite[Proposition 5.3]{CE}
the existence of an isomorphism $$\mathrm{Tor}^{A}_{i}(D(M),N))\simeq D\,\Ext^{i}_A(N,M),$$ which is natural 
in both variables. Hence, a $d$-Division can be given in terms of Tor's functors as follows.

\begin{remark}\label{ul} Let $A$ be an artin $R$-algebra, $d$ be a positive integer and $M$ in $\modu\,(A)$. Then, a 
pair $(X,Y)$ of objects in $\add\,(M)$ is a $d$-Division of $M$ if the following three conditions hold: 
\begin{itemize}
\item[(a)] $\add\,(X)\cap\add\,(Y)=\{0\};$
\item[(b)] $\mathrm{Tor}^{A}_{d}(-,X)\not\simeq \mathrm{Tor}^{A}_{d}(-,Y)$ in $\C_{A^{op}};$
\item[(c)] $\mathrm{Tor}^{A}_{d+1}(-,X)\simeq \mathrm{Tor}^{A}_{d+1}(-,Y)$ in $\C_{A^{op}}.$
\end{itemize}
\end{remark}

As a consequence of Remark \ref{ul} and Theorem \ref{FidimExt}, we get that the Igusa-Todorov function $\Phi$ can also be characterised by using the Tor's bi-functors $\mathrm{Tor}^{A}_{i}(-,-).$

\section{Invariance of the $\phi$-dimension}

Let $\mathcal{A}$ be a full additive subcategory of an abelian category $\mathfrak{A}.$ A complex $X^{\bullet}$ over $\mathcal{A}$ is a sequence of morphisms
$\{d_{X}^{i}:X^{i}\to X^{i+1}\}_{i\in\Z}$ in $\mathcal{A},$ called the differentials of the complex $X^{\bullet},$ such that $d_{X}^{i}d_{X}^{i-1}=0$
for all $i \in \mathbb{Z}$. We write $X^{\bullet}= (X^{i}, d^{i}_{X})$ and
denote by $H^{i}(X^{\bullet})$ the $i$-th cohomology of the complex $X^{\bullet}$. Observe that $H^{\bullet}(X^{\bullet}),$ with zero
differentials, is also a complex over $\mathfrak{A}.$ Usually, a complex $X^{\bullet}$
is written as follows 
$$X^{\bullet}\;:\quad \cdots \longrightarrow X^{i-1} \stackrel{d_{X}^{i-1}}{\longrightarrow} X^{i} \stackrel{d_{X}^{i}}{\longrightarrow}X^{i+1} \longrightarrow \cdots$$

A complex $X^{\bullet}$ induces, in a natural way, the following complexes (called truncations)
$$\begin{matrix}
\tau_{\leq n} (X^{\bullet})\;:\quad \cdots \rightarrow X^{n-2} \rightarrow X^{n-1} \rightarrow X^{n} \rightarrow 0 \rightarrow \cdots ,\\{}\\
\tau_{\geq n} (X^{\bullet})\;:\quad \cdots \rightarrow 0 \rightarrow X^{n} \rightarrow X^{n+1} \rightarrow X^{n+2} \rightarrow \cdots .
\end{matrix}$$

 The category of all complexes over $\mathcal{A},$ with the usual complex maps of degree zero as morphisms, is denoted by $\Com\,(\mathcal{A})$. Hence, 
we have the $i$-th cohomology functor $H^i:\Com\,(\mathcal{A})\to\mathfrak{A}.$ For a given subset $\Xi$ of $\Z$, we consider the class of
complexes $$\Com\,(\mathcal{A})_{\Xi}:=\{\ X^{\bullet}\in \Com\,(\mathcal{A})\;:\;\ X^i=0\quad \forall\, i\not\in \Xi\}$$ which are concentrated on the set $\Xi.$ Usually, as $\Xi,$ we shall consider intervals of integer numbers of the form $[n,m]:=\{x\in\Z\;:\; n\leq x\leq m\}.$
\

We denote by $\K\,(\mathcal{A})$ the homotopy category of complexes over $\mathcal{A},$ and by $\K^{-}\,(\mathcal{A}),$ $\K^{+}\,(\mathcal{A})$ and
$\K^{b}\,(\mathcal{A})$ to the full triangulated subcategories of $\K\,(\mathcal{A})$ consisting, respectively, of the bounded above, bounded below and bounded complexes. In case $\mathcal{A}$ is
an abelian category, we denote by $\D\,(\mathcal{A})$ the derived category of complexes over 
$\mathcal{A},$ and by $\D^{-}\,(\mathcal{A}),$ $\D^{+}\,(\mathcal{A})$ and $\D^{b}\,(\mathcal{A})$ the 
full triangulated subcategories of $\D\,(\mathcal{A})$ consisting, respectively, of the complexes 
which are bounded above, bounded below and with bounded cohomology.
\

Let $\Lambda$ be an artin $R$-algebra. In this special case, we simplify the notation by writing $\D\,(\Lambda)$ instead of $\D\,(\modu\,(\Lambda)).$ Analogously, we write $\Com\,(\Lambda)$ instead of $\Com\,(\modu\,(\Lambda)).$ Furthermore, for any integers $a,b$ with 
$a\leq b,$ we denote by $\D\,(\Lambda)_{[a,b]}$ to the full subcategory of $\D\,(\Lambda)$ whose objects $X^\bullet$ are such that $X^i=0$ for $i\not\in[a,b].$ It is also well-known that the canonical functor $\imath_0:\modu\,(\Lambda)\to\D\,(\Lambda),$ which sends $M\in\modu\,(\Lambda)$ to the stalk complex $M[0]$ concentrated in degree zero, is additive full and faithful. Hence, through the functor $\imath_0,$  the module category  $\modu\,(\Lambda)$ can be considered as a full additive subcategory of $\D\,(\Lambda).$ Finally, for the sake of simplicity, we sometimes denote the bi-functor $\Hom_{\D\,(\Lambda)}( - , - )$ by $(- , - ).$
\

In order to prove the main result of this section, we start with the following preparatory lemmas. In all that follows, $\Lambda$ stands for an artin $R$-algebra.

\begin{lem} \cite[Lemma 1.6]{K}  \label{projetivo} Let $m,k,d \, \in \, \mathbb{Z}$ with $d \geq 0;$ and let $X^{\bullet}, Y^{\bullet} \in \K^{b}\,(\modu\,(\Lambda))$ be such that $X^{p}=0$ for all $p < m$ and $Y^{q}=0$ for all $q > k.$ If $\Ext^{i}_{\Lambda}(X^{r}, Y^{s})=0$ for all $r, s \in \mathbb{Z}$ and $i \geq d,$ then
$$\Hom_{\D\,(\Lambda)}(X^{\bullet}, Y^{\bullet}[i])= 0\quad\text{ for all }i \geq d+ k-m.$$
\end{lem}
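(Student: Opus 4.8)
The goal is to compute $\Hom_{\D\,(\Lambda)}(X^{\bullet}, Y^{\bullet}[i])$ for $i$ large, so the natural approach is to induct on the total number of nonzero terms of $X^{\bullet}$ and $Y^{\bullet}$, using the standard stupid truncation triangles and the long exact sequence of $\Hom$'s in the triangulated category $\D\,(\Lambda)$. The base case is when both $X^{\bullet}$ and $Y^{\bullet}$ are stalk complexes, say $X^{\bullet}=X^{m}[-m]$ and $Y^{\bullet}=Y^{k}[-k]$. Then $\Hom_{\D\,(\Lambda)}(X^{\bullet}, Y^{\bullet}[i]) \simeq \Hom_{\D\,(\Lambda)}(X^{m}[0], Y^{k}[i-(k-m)]) \simeq \Ext^{\,i-(k-m)}_{\Lambda}(X^{m}, Y^{k})$, using that $\Ext$-groups are the $\Hom$'s between stalk complexes in the derived category of a module category. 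By hypothesis this vanishes whenever $i-(k-m)\geq d$, i.e.\ for $i\geq d+k-m$, which is exactly the claim in the base case.

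\textbf{The inductive step.} Suppose first that $X^{\bullet}$ has more than one nonzero term; let $m'$ be the largest index with $X^{m'}\neq 0$. The stupid truncation gives a triangle
$$\tau_{\geq m'}(X^{\bullet}) \longrightarrow X^{\bullet} \longrightarrow \tau_{\leq m'-1}(X^{\bullet}) \longrightarrow \tau_{\geq m'}(X^{\bullet})[1]$$
in $\K^{b}\,(\modu\,(\Lambda))\subseteq \D\,(\Lambda)$, where $\tau_{\geq m'}(X^{\bullet})=X^{m'}[-m']$ is a stalk and $\tau_{\leq m'-1}(X^{\bullet})$ has strictly fewer nonzero terms, all in degrees $\geq m$. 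Applying $\Hom_{\D\,(\Lambda)}(-, Y^{\bullet}[i])$ yields an exact sequence with $\Hom(X^{m'}[-m'], Y^{\bullet}[i])$ on one side and $\Hom(\tau_{\leq m'-1}(X^{\bullet}), Y^{\bullet}[i])$ on the other. For the stalk term, the hypothesis on $\Ext$-groups and the already-treated case (with $m$ replaced by $m'$, noting $m'\geq m$) give vanishing for $i\geq d+k-m'$, hence a fortiori for $i\geq d+k-m$; for the truncated term, the inductive hypothesis gives vanishing for $i\geq d+k-m$ as well, since $\tau_{\leq m'-1}(X^{\bullet})$ is still supported in degrees $\geq m$ and $\leq k$ (or wherever its top sits). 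The middle term $\Hom(X^{\bullet}, Y^{\bullet}[i])$ is then squeezed between two zero groups. One next runs the symmetric induction on the number of nonzero terms of $Y^{\bullet}$, using the truncation $\tau_{\leq k'}(Y^{\bullet})\to Y^{\bullet}\to \tau_{\geq k'+1}(Y^{\bullet})\to$ where $k'$ is the smallest index with $Y^{k'}\neq 0$; the same bookkeeping with $k$ replaced by $k'\leq k$ closes the argument.

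\textbf{Main obstacle.} The only delicate point is the bookkeeping of the indices: one must verify that at each truncation step the surviving subcomplexes still satisfy the hypotheses of the lemma with the \emph{same} $d$ and with $m$, $k$ replaced by values that only make the bound $d+k-m$ smaller (i.e.\ $m'\geq m$ and $k'\leq k$), so that the vanishing ranges nest correctly and the final bound $i\geq d+k-m$ is the binding one. The $\Ext$-vanishing hypothesis is over \emph{all} $r,s$, so it is automatically inherited by subcomplexes; the real care is just making sure the long exact sequence is applied in the right variable and that the shift by $k-m$ in passing from derived-category $\Hom$'s to $\Ext$'s is accounted for. Everything else is the routine machinery of triangulated categories.
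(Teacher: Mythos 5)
Your argument is correct, but note that the paper itself gives no proof of this lemma: it is quoted verbatim from Kato \cite[Lemma 1.6]{K}, so there is no internal proof to compare against. Your truncation d\'evissage is the standard argument for this kind of statement (and is the same technique the paper itself uses repeatedly afterwards, in Lemmas \ref{lema3}, \ref{lema2}, \ref{lema4} and \ref{parte1}): reduce via the triangles coming from brutal truncation to the stalk-versus-stalk case, where $\Hom_{\D\,(\Lambda)}(X^{m'}[-m'],Y^{k'}[i-k'])\simeq\Ext^{\,i+m'-k'}_{\Lambda}(X^{m'},Y^{k'})$, and observe that $i\geq d+k-m$ together with $m'\geq m$ and $k'\leq k$ forces $i+m'-k'\geq d$, so the hypothesis (which holds for all $r,s$ and hence passes to every truncation) applies; your bookkeeping of the bound is exactly right. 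Two small points of hygiene rather than substance. First, run the argument as a single induction on the total number of nonzero terms of $X^{\bullet}$ and $Y^{\bullet}$ (as you announce at the outset), because in your $X$-step the outer term $\Hom_{\D\,(\Lambda)}(X^{m'}[-m'],Y^{\bullet}[i])$ has $Y^{\bullet}$ still arbitrary, and it is only "already treated" under that global framing, not under an induction on $X^{\bullet}$ alone. Second, the triangle for $Y^{\bullet}$ points the other way: the subcomplex is the high-degree part, so it reads $\tau_{\geq k'+1}(Y^{\bullet})\to Y^{\bullet}\to\tau_{\leq k'}(Y^{\bullet})\to\tau_{\geq k'+1}(Y^{\bullet})[1]$; this does not affect your conclusion, since both outer terms of the resulting long exact sequence vanish in the stated range, but it should be written correctly.
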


\begin{lem} \label{lema3} Let $k\geq 0$ and $\ell>0;$ and
let $Z^{\bullet}, W^{\bullet}\in \Com\,(\Lambda)_{[-k,0]}$ with $Z^i, W^i\in \proj\,(\Lambda)\;$ $\forall\,i\in[-k+1, 0].$\\
If $\Hom_{\D\,(\Lambda)}(Z^{\bullet}, -[t])|_{\modu\,(\Lambda)}\simeq \Hom_{\D\,(\Lambda)}(W^{\bullet}, -[t])|_{\modu\,(\Lambda)}\;$ for all $t \geq k+\ell,$ then
$$\Ext_{\Lambda}^t(Z^{-k}, -) \simeq \Ext_{\Lambda}^t(W^{-k}, -)\;\text{ for all }\;t \geq 
k+\ell.$$
\end{lem}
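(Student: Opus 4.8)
The plan is to relate the complexes $Z^{\bullet}$ and $W^{\bullet}$, which have projective components in degrees $-k+1,\dots,0$, to the stalk complexes $Z^{-k}[k]$ and $W^{-k}[k]$, and then translate the hypothesis on Hom-functors in $\D\,(\Lambda)$ into a statement about $\Ext$-functors. First I would observe that, for each of the two complexes, the brutal truncation gives a short exact sequence of complexes
$$0\to Z^{-k}[k]\to Z^{\bullet}\to \tau_{\geq -k+1}(Z^{\bullet})\to 0,$$
where $\tau_{\geq -k+1}(Z^{\bullet})$ is a bounded complex of projectives concentrated in degrees $-k+1,\dots,0$. This distinguished triangle in $\D\,(\Lambda)$, upon applying $\Hom_{\D\,(\Lambda)}(-,N[t])$ for $N\in\modu\,(\Lambda)$, yields a long exact sequence connecting $\Hom_{\D\,(\Lambda)}(Z^{\bullet},N[t])$, $\Hom_{\D\,(\Lambda)}(Z^{-k}[k],N[t])\cong\Ext^{t-k}_{\Lambda}(Z^{-k},N)$, and the Hom-groups out of the projective complex $\tau_{\geq -k+1}(Z^{\bullet})$.

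\textbf{Key vanishing step.} The crucial point is that $\Hom_{\D\,(\Lambda)}(\tau_{\geq -k+1}(Z^{\bullet}), N[t])=0$ for $t$ large enough, since $\tau_{\geq -k+1}(Z^{\bullet})$ is a bounded complex of projectives: here I would invoke Lemma \ref{projetivo} with the complex of projectives in the first variable (so $d=1$, as $\Ext^{i}_{\Lambda}(P,N)=0$ for all $i\geq 1$ when $P$ is projective), $m=-k+1$, and $N[0]$ in the second variable (so $k=0$ in the notation of that lemma), giving $\Hom_{\D\,(\Lambda)}(\tau_{\geq -k+1}(Z^{\bullet}), N[i])=0$ for all $i\geq 1-(-k+1)=k$. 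Consequently, for $t\geq k+1$ the connecting maps in the long exact sequence force an isomorphism
$$\Hom_{\D\,(\Lambda)}(Z^{\bullet},N[t])\;\cong\;\Hom_{\D\,(\Lambda)}(Z^{-k}[k],N[t])\;\cong\;\Ext^{t-k}_{\Lambda}(Z^{-k},N),$$
and the same for $W^{\bullet}$. One must also check naturality in $N$, which is automatic since everything in sight is a natural transformation of functors on $\modu\,(\Lambda)$.

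\textbf{Conclusion.} Combining the two isomorphisms with the hypothesis $\Hom_{\D\,(\Lambda)}(Z^{\bullet},-[t])|_{\modu\,(\Lambda)}\simeq \Hom_{\D\,(\Lambda)}(W^{\bullet},-[t])|_{\modu\,(\Lambda)}$ for $t\geq k+\ell$, we obtain $\Ext^{t-k}_{\Lambda}(Z^{-k},-)\simeq \Ext^{t-k}_{\Lambda}(W^{-k},-)$ for all $t\geq k+\ell$; re-indexing $s:=t-k$ this says $\Ext^{s}_{\Lambda}(Z^{-k},-)\simeq \Ext^{s}_{\Lambda}(W^{-k},-)$ for all $s\geq \ell$, which in particular holds for all $t\geq k+\ell$ as claimed (since $k\geq 0$ and $\ell>0$, the range $t\geq k+\ell$ is contained in $s\geq\ell$ after the shift). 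The main obstacle I anticipate is bookkeeping with the degree shifts and making sure the bound coming from Lemma \ref{projetivo} ($i\geq k$) is compatible with the range $t\geq k+\ell$ where the hypothesis is available — but since $\ell>0$, we have $k+\ell>k$, so the vanishing is in force throughout the relevant range and the argument goes through. A minor subtlety worth spelling out is the identification $\Hom_{\D\,(\Lambda)}(Z^{-k}[k],N[t])\cong\Ext^{t-k}_{\Lambda}(Z^{-k},N)$, which is the standard fact that morphisms between stalk complexes in the derived category compute $\Ext$.
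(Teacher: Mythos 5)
Your argument is correct, and it takes a genuinely different route from the paper. The paper proves the lemma by induction, peeling the projective terms off one at a time from the top: it applies $(-,S[t])$ first to the triangle $Z^{0}[0]\to Z^{\bullet}\to\tau_{\leq -1}(Z^{\bullet})\to Z^{0}[1]$ and then to the analogous triangles for the successive truncations (suitably shifted), killing the outer terms using only that each $Z^{-i}$ with $-i\in[-k+1,0]$ is projective, and transferring the hypothesised isomorphism of functors step by step down to the stalk $Z^{-k}$; Lemma \ref{projetivo} is not even invoked there. You instead cut once at the bottom and apply Lemma \ref{projetivo} to the whole projective part $\tau_{\geq -k+1}(Z^{\bullet})$, obtaining in one stroke a natural identification $\Hom_{\D\,(\Lambda)}(Z^{\bullet},-[t])|_{\modu\,(\Lambda)}\simeq\Ext^{t-k}_{\Lambda}(Z^{-k},-)$ (and likewise for $W^{\bullet}$). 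This is shorter, avoids the inductive shift bookkeeping, and in fact yields slightly more than the statement: after re-indexing you get the $\Ext$-isomorphism for all degrees $\geq\ell$, hence a fortiori for all $t\geq k+\ell$ as required.

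One small correction to your setup: the short exact sequence is oriented the wrong way. Since the differential raises degree, $\tau_{\geq -k+1}(Z^{\bullet})$ is the \emph{sub}complex and the bottom stalk is the \emph{quotient}; the correct sequence is $0\to\tau_{\geq -k+1}(Z^{\bullet})\to Z^{\bullet}\to Z^{-k}[k]\to 0$, giving the triangle $\tau_{\geq -k+1}(Z^{\bullet})\to Z^{\bullet}\to Z^{-k}[k]\to\tau_{\geq -k+1}(Z^{\bullet})[1]$ (in general there is no chain map $Z^{-k}[k]\to Z^{\bullet}$, because $d^{-k}_{Z}\neq 0$). This changes nothing of substance: applying $\Hom_{\D\,(\Lambda)}(-,N[t])$ to the corrected triangle, the vanishing from Lemma \ref{projetivo}, namely $\Hom_{\D\,(\Lambda)}(\tau_{\geq -k+1}(Z^{\bullet}),N[i])=0$ for $i\geq k$, is now needed at $i=t-1$ and $i=t$, i.e. for $t\geq k+1$ — which is exactly the range you claimed and is contained in $t\geq k+\ell$ since $\ell>0$. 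With that fix, the naturality remark and the re-indexing $s=t-k$ are as you say, and the proof is complete.
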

\begin{proof} Assume that $\Hom_{\D\,(\Lambda)}(Z^{\bullet}, -[t])|_{\modu\,(\Lambda)}\simeq \Hom_{\D\,(\Lambda)}(W^{\bullet}, -[t])|_{\modu\,(\Lambda)}\;$ for all $t \geq k+\ell.$
Let  $S\in\modu\,(\Lambda).$ By applying the functor $(-, S[t])$ to the distinguished triangles
$$Z^{0}[0] \rightarrow Z^{\bullet} \rightarrow \tau_{\leq -1}(Z^{\bullet}) \rightarrow Z^{0}[1]\,\, \text{and} \,\,\,\, W^{0}[0]
\rightarrow W^{\bullet} \rightarrow \tau_{\leq -1}(W^{\bullet}) \rightarrow W^{0}[1],$$
\noindent we get the following exact sequences
$$(Z^{0}[0], S[t])  \rightarrow (\tau_{\leq -1}(Z^{\bullet})[-1], S[t]) \rightarrow (Z^{\bullet}[-1], S[t]) \rightarrow  (Z^{0}[-1], S[t]),$$
$$(W^{0}[0], S[t])  \rightarrow (\tau_{\leq -1}(W^{\bullet})[-1], S[t]) \rightarrow (W^{\bullet}[-1], S[t]) \rightarrow  (W^{0}[-1], S[t]).$$
\noindent Moreover, since $Z^{0},$ $W^{0}\in\proj\,(\Lambda)$ and $t\geq k+\ell\geq 1,$ it follows that\\ $(\tau_{\leq -1}(Z^{\bullet})[-1], -[t])|_{\modu\,(\Lambda)}
 \simeq (Z^{\bullet}[-1], -[t])|_{\modu\,(\Lambda)}$ and also we have that\\ $(\tau_{\leq -1}(W^{\bullet})[-1], -[t])
 |_{\modu\,(\Lambda)} \simeq (W^{\bullet}[-1], -[t])|_{\modu\,(\Lambda)}$ for any $t\geq n+\ell.$ Hence, by composing the given above isomorphisms of functors, we
get that
$$(\tau_{\leq -1}(Z^{\bullet})[-1], -[t])|_{\modu\,(\Lambda)}\simeq (\tau_{\leq -1}(W^{\bullet})[-1], -[t])|_{\modu\,(\Lambda)}\;\text{ for any }\; t\geq k+\ell.$$
\noindent We proceed by induction as follows. The step $i=1,$ is given by the above isomorphism of functors. Consider $1 < i \leq k-1$ and apply the functor $(-,S[t]),$ for each $S\in\modu\,(\Lambda),$ to the distinguished triangles
$Z^{-i}[0] \rightarrow  \tau_{\leq -i}(Z^{\bullet})[-i] \rightarrow \tau_{\leq -i-1}\,(Z^{\bullet})[-i] \rightarrow Z^{-i}[1]$ and
$W^{-i}[0] \rightarrow  \tau_{\leq -i}(W^{\bullet})[-i] \rightarrow \tau_{\leq -i-1}\,(W^{\bullet}) [-i] \rightarrow W^{-i}[1].$ So, by
repeating the same argument as above and using that\\ $(\tau_{\leq -i}(Z^{\bullet})[-i], -[t])
|_{\modu\,(\Lambda)}
\simeq (\tau_{\leq -i}(Z^{\bullet})[-i], -[t])|_{\modu\,(\Lambda)}$ (the previous step)
 for all $t \geq k+d$, the fact that $Z^{-i}$ and $W^{-i}$ are projective for all $-i\in[-k+1, 0]$, we obtain that 
$(Z^{-k}[0], -[t])|_{\modu\,(\Lambda)} \simeq ( W^{-k}[0], -[t])|_{\modu\,(\Lambda)}$ for all $t \geq k+d.$
\end{proof}

\begin{lem}\label{lema2} Let $k$, $\ell \in \mathbb{Z}$ be such that $0 < k< \ell.$ Let $Z^{\bullet}, W^{\bullet}\in \D\,(\Lambda)_{[-k,0]}$
with $Z^{i},$ $W^{i}\in\proj\,(\Lambda)$ for all $i\in[-k+1,0].$\\ If $\Hom_{\D\,(\Lambda)}(Z^{-k}[k], -\, [\ell])|_{\D\,(\Lambda)_{[-k,0]}} \simeq \Hom_{\D\,(\Lambda)}(W^{-k}[k], - \, [\ell])|_{\D\,(\Lambda)_{[-k,0]}},$
then $$\Hom_{\D\,(\Lambda)}(Z^{\bullet}, -\, [\ell])|_{\D\,(\Lambda)_{[-k,0]}} \simeq \Hom_{\D\,(\Lambda)}(W^{\bullet}, - \,  [\ell])|_{\D\,(\Lambda)_{[-k,0]}}.$$
\end{lem}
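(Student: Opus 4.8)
The plan is to compare $Z^{\bullet}$ with the stalk complex $Z^{-k}[k]$ (and likewise $W^{\bullet}$ with $W^{-k}[k]$) by means of the brutal-truncation triangle, and then to chain the two resulting isomorphisms of functors with the hypothesis. The heuristic is that everything that distinguishes $Z^{\bullet}$ from its bottom term $Z^{-k}[k]$ is a bounded complex of projectives, which Lemma \ref{projetivo} annihilates in large enough degree.

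First I would write down the short exact sequence of complexes $0\to \tau_{\geq -k+1}(Z^{\bullet})\to Z^{\bullet}\to \tau_{\leq -k}(Z^{\bullet})\to 0$, observing that $\tau_{\leq -k}(Z^{\bullet})=Z^{-k}[k]$ (since $Z^{i}=0$ for $i<-k$) and that every term of $\tau_{\geq -k+1}(Z^{\bullet})$ lies in $\proj\,(\Lambda)$. This sequence induces a distinguished triangle $\tau_{\geq -k+1}(Z^{\bullet})\to Z^{\bullet}\to Z^{-k}[k]\to \tau_{\geq -k+1}(Z^{\bullet})[1]$ in $\D\,(\Lambda)$. For each $S^{\bullet}\in\D\,(\Lambda)_{[-k,0]}$ I would apply the cohomological functor $\Hom_{\D\,(\Lambda)}(-,S^{\bullet}[\ell])$ to this triangle; the resulting long exact sequence places the morphism $(Z^{-k}[k],S^{\bullet}[\ell])\to (Z^{\bullet},S^{\bullet}[\ell])$ between the two terms $(\tau_{\geq -k+1}(Z^{\bullet}),S^{\bullet}[\ell-1])$ and $(\tau_{\geq -k+1}(Z^{\bullet}),S^{\bullet}[\ell])$.

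The key step is the vanishing of these two flanking terms, which I would obtain from Lemma \ref{projetivo} applied with $X^{\bullet}=\tau_{\geq -k+1}(Z^{\bullet})$ and $Y^{\bullet}=S^{\bullet}$: here $X^{p}=0$ for $p<-k+1$, $Y^{q}=0$ for $q>0$, and since every $X^{r}$ is projective one may take $d=1$; the lemma then gives $\Hom_{\D\,(\Lambda)}(\tau_{\geq -k+1}(Z^{\bullet}),S^{\bullet}[i])=0$ for all $i\geq 1+0-(-k+1)=k$. Because $0<k<\ell$ forces $\ell\geq k+1$, both $\ell$ and $\ell-1$ are $\geq k$, so both flanking terms vanish and the long exact sequence yields an isomorphism $(Z^{-k}[k],S^{\bullet}[\ell])\simeq (Z^{\bullet},S^{\bullet}[\ell])$, natural in $S^{\bullet}$ since the long exact sequence is functorial in its second variable. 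Hence $\Hom_{\D\,(\Lambda)}(Z^{-k}[k],-[\ell])|_{\D\,(\Lambda)_{[-k,0]}}\simeq \Hom_{\D\,(\Lambda)}(Z^{\bullet},-[\ell])|_{\D\,(\Lambda)_{[-k,0]}}$, and by the identical argument $\Hom_{\D\,(\Lambda)}(W^{-k}[k],-[\ell])|_{\D\,(\Lambda)_{[-k,0]}}\simeq \Hom_{\D\,(\Lambda)}(W^{\bullet},-[\ell])|_{\D\,(\Lambda)_{[-k,0]}}$.

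Finally, composing these two isomorphisms of functors with the one assumed in the hypothesis yields $\Hom_{\D\,(\Lambda)}(Z^{\bullet},-[\ell])|_{\D\,(\Lambda)_{[-k,0]}}\simeq \Hom_{\D\,(\Lambda)}(W^{\bullet},-[\ell])|_{\D\,(\Lambda)_{[-k,0]}}$, as desired. The only genuinely delicate point is the index bookkeeping in the application of Lemma \ref{projetivo}: one must land exactly on the bound $i\geq k$, so that the strict inequality $k<\ell$ is precisely what makes both the degree-$\ell$ and the degree-$(\ell-1)$ terms vanish; beyond that no real computation is involved, and no use is made of the upper part of the complexes other than its projectivity.
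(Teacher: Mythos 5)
Your proof is correct. The index bookkeeping is right: with $X^{\bullet}=\tau_{\geq -k+1}(Z^{\bullet})$ (all terms projective, so $d=1$, $m=-k+1$) and $Y^{\bullet}=S^{\bullet}$ concentrated in $[-k,0]$ (upper bound $0$), Lemma \ref{projetivo} gives vanishing of $\Hom_{\D\,(\Lambda)}(\tau_{\geq -k+1}(Z^{\bullet}),S^{\bullet}[i])$ for $i\geq k$, and $k<\ell$ is exactly what makes both degrees $\ell-1$ and $\ell$ admissible; naturality in $S^{\bullet}$ is clear since the relevant map is composition with the fixed triangle map $Z^{\bullet}\to Z^{-k}[k]$.

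Your route differs from the paper's in its organisation, though both rest on brutal truncations plus Lemma \ref{projetivo}. The paper argues by contradiction and peels off the projective terms one degree at a time, using the triangles $Z^{-i}[i]\to\tau_{\leq -i}(Z^{\bullet})\to\tau_{\leq -i-1}(Z^{\bullet})\to Z^{-i}[i+1]$ and applying Lemma \ref{projetivo} only to stalk complexes of projectives, propagating a putative non-isomorphism down to the bottom terms to contradict the hypothesis. You instead use the single short exact sequence $0\to\tau_{\geq -k+1}(Z^{\bullet})\to Z^{\bullet}\to Z^{-k}[k]\to 0$ and one application of Lemma \ref{projetivo} to the whole projective part, obtaining directly the natural isomorphism $\Hom_{\D\,(\Lambda)}(Z^{\bullet},-[\ell])|_{\D\,(\Lambda)_{[-k,0]}}\simeq \Hom_{\D\,(\Lambda)}(Z^{-k}[k],-[\ell])|_{\D\,(\Lambda)_{[-k,0]}}$ (and likewise for $W^{\bullet}$), after which the hypothesis is simply composed in. What your version buys is a direct, induction-free argument with a clean intermediate statement and no handling of non-isomorphisms through exact sequences; what it costs is that you invoke Lemma \ref{projetivo} in its full strength for a genuine bounded complex of projectives rather than merely for stalk complexes, which is a harmless trade since the lemma is stated at that generality.
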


\begin{proof} For simplicity, we write $(-,-)$ instead of $\Hom_{\D\,(\Lambda)}(-, -)|_{\D\,(\Lambda)_{[-k,0]}}.$ Suppose that $(Z^{\bullet}, -\, [\ell])\not\simeq (W^{\bullet}, - \,  [\ell]).$ From the following distinguished triangles
$$Z^{0}[0] \rightarrow  Z^{\bullet} \rightarrow  \tau_{\leq -1}(Z^{\bullet})\to Z^0[1]   \, \, \, \,
\text{and} \, \,\, \, \, \,
W^{0}[0] \rightarrow  W^{\bullet} \rightarrow  \tau_{\leq -1}(W^{\bullet})\to W^0[1],$$
we get the following exact sequences of functors 
$$\begin{matrix}
(Z^{0}[1], - \, [\ell]) \rightarrow (\tau_{\leq -1}(Z^{\bullet}), - \, [\ell]) \rightarrow
 (Z^{\bullet}, - \, [\ell])  \rightarrow (Z^{0}[0], - \, [\ell]),\\{}\\
(W^{0}[1], - \, [\ell]) \rightarrow (\tau_{\leq -1}(W^{\bullet}), - \, [\ell]) \rightarrow
 (W^{\bullet}, - \, [\ell]) \rightarrow (W^{0}[0], - \, [\ell]).
\end{matrix}$$
\noindent From the Lemma \ref{projetivo}, we have that $(Z^{0}[1],
- \,  [\ell])= (Z^{0}[0], - \, [\ell])= (W^{0}[1], - \,
[\ell])= (W^{0}[0], - \, [\ell])= 0.$ Therefore $(\tau_{\leq
-1}(Z^{\bullet}), - \, [\ell]) \simeq (Z^{\bullet}, - \,
[\ell])$ and $(\tau_{\leq -1}(W^{\bullet}), - \, [\ell]) \simeq
(W^{\bullet}, - \, [\ell])$. By our assumption, we get that  
 $$(\tau_{\leq -1}(Z^{\bullet}), - \, [\ell])\not\simeq
(\tau_{\leq -1}(W^{\bullet}), - \,[\ell]).$$
We proceed by induction as follows. The step $i=1$ is given by the above statement.  
So, by the inductive step, we can assume that $(\tau_{\leq
-i}(Z^{\bullet}), - \, [\ell])\not\simeq (\tau_{\leq -i}(W^{\bullet}), - \, [\ell])$ for any 
$1<i\leq k-1.$ By applying the functor $( \, \, , - \,[\ell])$ to the distinguished triangles
$Z^{-i}[i] \rightarrow  \tau_{\leq -i}(Z^{\bullet}) \rightarrow
\tau_{\leq -i-1}\,(Z^{\bullet})\to Z^{-i}[i+1]$ and $W^{-i}[i] \to \tau_{\leq -i}(W^{\bullet})
\rightarrow\tau_{\leq -i}(W^{\bullet})\to W^{-i}[i+1],$ and using that 
$(Z^{-i}[i+1], - \, [\ell])=
(W^{-i}[i+1], - \, [\ell])= (Z^{-i}[i], - \, [\ell])=
(W^{-i}[i], - \, [\ell])= 0,$ we get that
$$(\tau_{\leq -i-1}\,(Z^{\bullet}), - \, [\ell])\not\simeq
(\tau_{\leq -i-1}\,(W^{\bullet}), - \, [\ell]).$$ Thus, we obtain
$(Z^{-k}[k], - \,[\ell])\not\simeq (W^{-k}[k], -
\,[\ell]);$ which is a contradiction. So, our assumption is not true, and hence the result 
follows.
\end{proof}

\begin{lem} \label{lema4} Let $\ell>0$ and $k\geq 0;$ and let $K$, $S \in \modu\,(\Lambda)$. If $\Ext^{\ell}_\Lambda(K, -\, )\simeq 
\Ext^{\ell}_\Lambda(S, -\,)$ then $\Hom_{\D\,(\Lambda)}(K,- \, [\ell])|_{\D\,(\Lambda)_{[-k,0]}} \simeq \Hom_{\D\,(\Lambda)}(S, - \, [\ell])
|_{\D\,(\Lambda)_{[-k,0]}} .$
\end{lem}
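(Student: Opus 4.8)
The plan is to dimension-shift the first argument inside $\D\,(\Lambda)$ until the assertion is reduced to first syzygies, where Proposition~\ref{eqExt} applies directly. First I would rewrite the hypothesis in module-theoretic terms: by Corollary~\ref{coroteogrupos} (with $n=\ell-1$) together with the equivalence (b)$\Leftrightarrow$(d) of Proposition~\ref{eqExt} applied to $\Omega^{\ell-1}K$ and $\Omega^{\ell-1}S$, the assumption $\Ext^{\ell}_\Lambda(K,-)\simeq\Ext^{\ell}_\Lambda(S,-)$ is equivalent to the existence of an isomorphism of $\Lambda$-modules
$$\varphi\colon\ \Omega^{\ell-1}K\oplus P_0(\Omega^{\ell-1}S)\ \xrightarrow{\,\sim\,}\ \Omega^{\ell-1}S\oplus P_0(\Omega^{\ell-1}K).$$

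The basic ingredient is a vanishing statement: if $P\in\proj\,(\Lambda)$ and $X^{\bullet}\in\D\,(\Lambda)_{[-k,0]}$, then $\Hom_{\D\,(\Lambda)}(P,X^{\bullet}[j])=0$ for every $j\geq 1$. Indeed, since $P$ is projective this group is the $j$-th cohomology of the Hom-complex $\Hom_\Lambda(P,X^{\bullet})$, which is concentrated in degrees $[-k,0]$ because $X^{\bullet}$ is.

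Next comes the dimension shift. For $M\in\modu\,(\Lambda)$ the short exact sequence $0\to\Omega M\to P_0(M)\to M\to 0$ gives a distinguished triangle $\Omega M\to P_0(M)\to M\to \Omega M[1]$ in $\D\,(\Lambda)$; applying the cohomological functor $\Hom_{\D\,(\Lambda)}(-,X^{\bullet})$ and killing, by the vanishing ingredient, the two terms $\Hom_{\D\,(\Lambda)}(P_0(M),X^{\bullet}[j])$ and $\Hom_{\D\,(\Lambda)}(P_0(M),X^{\bullet}[j-1])$, one obtains for every $j\geq 2$ an isomorphism, natural in $X^{\bullet}\in\D\,(\Lambda)_{[-k,0]}$,
$$\Hom_{\D\,(\Lambda)}(\Omega M,-[j-1])|_{\D\,(\Lambda)_{[-k,0]}}\ \simeq\ \Hom_{\D\,(\Lambda)}(M,-[j])|_{\D\,(\Lambda)_{[-k,0]}}.$$
Iterating this $\ell-1$ times (equivalently, splicing the truncated projective resolution of $K$ into a single triangle) yields
$$\Hom_{\D\,(\Lambda)}(K,-[\ell])|_{\D\,(\Lambda)_{[-k,0]}}\ \simeq\ \Hom_{\D\,(\Lambda)}(\Omega^{\ell-1}K,-[1])|_{\D\,(\Lambda)_{[-k,0]}},$$
and likewise with $S$ in place of $K$; when $\ell=1$ this step is vacuous.

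Finally, applying $\Hom_{\D\,(\Lambda)}(-,X^{\bullet}[1])$ to $\varphi$ (regarded in $\D\,(\Lambda)$ via stalk complexes) and using additivity of $\Hom$ produces a natural isomorphism between $\Hom_{\D\,(\Lambda)}(\Omega^{\ell-1}K,-[1])\oplus\Hom_{\D\,(\Lambda)}(P_0(\Omega^{\ell-1}S),-[1])$ and $\Hom_{\D\,(\Lambda)}(\Omega^{\ell-1}S,-[1])\oplus\Hom_{\D\,(\Lambda)}(P_0(\Omega^{\ell-1}K),-[1])$; on $\D\,(\Lambda)_{[-k,0]}$ the two projective summands vanish by the vanishing ingredient with $j=1$, so this becomes a natural isomorphism $\Hom_{\D\,(\Lambda)}(\Omega^{\ell-1}K,-[1])|_{\D\,(\Lambda)_{[-k,0]}}\simeq \Hom_{\D\,(\Lambda)}(\Omega^{\ell-1}S,-[1])|_{\D\,(\Lambda)_{[-k,0]}}$. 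Composing the three isomorphisms of functors gives the statement. The only points that need care are the naturality in $X^{\bullet}$ of each isomorphism (which is automatic, since each is induced by a fixed morphism or a fixed triangle and $\Hom_{\D\,(\Lambda)}$ is a bifunctor) and the boundary case $\ell=1$; I do not expect a real obstacle here, because every shift performed is by a strictly positive amount and therefore lands in the range $j\geq 1$ covered by the vanishing ingredient. (Alternatively, one could imitate the brutal-truncation induction used in Lemmas~\ref{lema3} and~\ref{lema2}, reducing $k$ by one at each step and feeding in Corollary~\ref{coroteogrupos} to control the $\Ext^{\geq\ell}$ terms.)
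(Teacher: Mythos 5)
Your argument is correct, but it takes a genuinely different route from the paper. The paper works on the second variable: using the natural replacement of \cite{BM}, any $Y^{\bullet}\in\D\,(\Lambda)_{[-k,0]}$ is replaced, naturally in $Y^{\bullet}$, by an isomorphic complex whose terms in degrees $[-k,-1]$ are injective; then a single truncation triangle $Y^{0}[\ell]\to Y^{\bullet}[\ell]\to\tau_{\leq -1}(Y^{\bullet})[\ell]\to$ together with Lemma \ref{projetivo} (vanishing of $\Hom_{\D\,(\Lambda)}(K,-)$ and $\Hom_{\D\,(\Lambda)}(S,-)$ against the injective part) yields $(K,Y^{\bullet}[\ell])\simeq (K,Y^{0}[\ell])\simeq\Ext^{\ell}_{\Lambda}(K,Y^{0})$, and the hypothesis applied to the module $Y^{0}$ finishes the proof. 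You instead operate on the first variable: you turn the Ext-hypothesis into an actual module isomorphism $\Omega^{\ell-1}K\oplus P_{0}(\Omega^{\ell-1}S)\simeq\Omega^{\ell-1}S\oplus P_{0}(\Omega^{\ell-1}K)$ via Corollary \ref{coroteogrupos} and Proposition \ref{eqExt}, and dimension-shift $\Hom_{\D\,(\Lambda)}(K,-[\ell])$ down to $\Hom_{\D\,(\Lambda)}(\Omega^{\ell-1}K,-[1])$ along syzygy triangles, using only the vanishing $\Hom_{\D\,(\Lambda)}(P,X^{\bullet}[j])=0$ for $P$ projective, $X^{\bullet}\in\D\,(\Lambda)_{[-k,0]}$ and $j\geq 1$ (itself a special case of Lemma \ref{projetivo}); your ranges of shifts and the boundary case $\ell=1$ are handled correctly. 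Both proofs are sound. Yours buys independence from the injective replacement of \cite{BM} (naturality in the second variable is automatic, since every isomorphism you use is induced by a fixed morphism or a fixed triangle in $\D\,(\Lambda)$) and reuses the Section 3 machinery on $\K(\Lambda)$; the paper's proof, by contrast, never passes to syzygies or to $\K(\Lambda)$, stays entirely in the second variable, and matches the truncation-induction style of Lemmas \ref{lema3} and \ref{lema2}, at the cost of invoking the natural construction from \cite{BM}.
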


\begin{proof} By \cite{BM} (see Section 1 and Section 2), we have that there is an isomorphism $\mu_{Y^{\bullet}}:Y^{\bullet}\to 
L(Y^{\bullet})$ in $\D\,(\Lambda)_{[-k,0]},$ which is natural on the variable $Y^{\bullet};$ and moreover 
$L(Y^{\bullet})^{i}\in\inj\,(\Lambda)\;$ $\forall\,i\in[-k,-1].$
\

Let $Y^{\bullet}\in \D\,(\Lambda)_{[-k,0]}.$ Then, by the discussion, given above, it can be assumed that  
$Y^{i}\in\inj\,(\Lambda)\;$ $\forall\,i\in[-k,-1].$ By applying the functors 
$$(K, - ):=\Hom_{\D\,(\Lambda)}(K,-)|_{\D\,(\Lambda)_{[-k,0]}}\text{ and } (S, - ):=\Hom_{\D\,(\Lambda)}(S,-)|_{\D\,(\Lambda)_{[-k,0]}}$$ to 
the distinguished triangle $Y^{0} [\ell] \rightarrow Y^{\bullet}
[\ell]\rightarrow \tau_{\leq -1}(Y^{\bullet})[\ell]\rightarrow Y^{0} [\ell],$ we obtain the following exact sequences
$$\begin{matrix} (K, \tau_{\leq -1}(Y^{\bullet})[\ell-1])
\rightarrow  (K, Y^{0}[\ell]) \rightarrow  (K, Y^{\bullet}[\ell])
\rightarrow  (K, \tau_{\leq -1}(Y^{\bullet})[\ell]),\\{}\\
(S, \tau_{\leq -1}(Y^{\bullet})[\ell-1])
\rightarrow  (S, Y^{0}[\ell]) \rightarrow  (S, Y^{\bullet}[\ell])
\rightarrow  (S, \tau_{\leq -1}(Y^{\bullet})[\ell]).
\end{matrix} $$
By Lemma \ref{projetivo}, we have that the following equalities hold $(K, \tau_{\leq -1}(Y^{\bullet})[\ell-1])= (S,
\tau_{\leq -1}(Y^{\bullet})[\ell-1])= (K, \tau_{\leq
-1}(Y^{\bullet})[\ell])= (S, \tau_{\leq -1}(Y^{\bullet})[\ell])=0;$ and since\\ $\Ext^{\ell}_\Lambda(K, -\, )\simeq 
\Ext^{\ell}_\Lambda(S, -\,),$ it follows that $(K, - \, [\ell]) \simeq (S,- \, [\ell]),$ proving the result.
\end{proof}

\begin{cor}\label{coro} Let $k$ and $\ell$ be integers such that $0 < k< \ell;$ and let $Z^{\bullet}, W^{\bullet}\in \D\,(\Lambda)_{[-k,0]}$ 
be such that $Z^{i},W^{i}\in\proj\,(\Lambda)\;$ $\forall\,i\in[-k+1,0].$\\  If
$\Ext^{\ell-k}_{\Lambda}(Z^{-k}, -\, ) \simeq \Ext^{\ell-k}_{\Lambda}(W^{-n}, -\,)$
then $$\Hom_{\D\,(\Lambda)}(Z^{\bullet},-\, [\ell])|_{\D\,(\Lambda)_{[-k,0]}} \simeq \Hom_{\D\,(\Lambda)}(W^{\bullet},- \,[\ell])|_{
\D\,(\Lambda)_{[-k,0]}}.$$
\end{cor}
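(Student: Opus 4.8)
The plan is to obtain the corollary as a formal consequence of Lemmas \ref{lema4} and \ref{lema2}, inserting a degree shift between the two applications. (Note that the statement contains a harmless typo: $W^{-n}$ should read $W^{-k}$, which is what I use below.)

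First I would apply Lemma \ref{lema4} with the roles of its parameters $\ell$ and $k$ played by $\ell-k$ and $k$ respectively, and with $K:=Z^{-k}\in\modu\,(\Lambda)$, $S:=W^{-k}\in\modu\,(\Lambda)$. The numerical hypotheses are in force since $\ell-k>0$ (because $\ell>k$) and $k\geq 0$. As our standing hypothesis is $\Ext^{\ell-k}_\Lambda(Z^{-k},-)\simeq\Ext^{\ell-k}_\Lambda(W^{-k},-)$, Lemma \ref{lema4} yields a natural isomorphism
$$\Hom_{\D\,(\Lambda)}(Z^{-k},-\,[\ell-k])|_{\D\,(\Lambda)_{[-k,0]}}\simeq \Hom_{\D\,(\Lambda)}(W^{-k},-\,[\ell-k])|_{\D\,(\Lambda)_{[-k,0]}}.$$

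Next I would rephrase this in terms of the stalk complexes $Z^{-k}[k]$ and $W^{-k}[k]$. For every $X^{\bullet}\in\D\,(\Lambda)$, the shift autoequivalence supplies a natural isomorphism $\Hom_{\D\,(\Lambda)}(Z^{-k}[k],X^{\bullet}[\ell])\cong\Hom_{\D\,(\Lambda)}(Z^{-k},X^{\bullet}[\ell-k])$, natural in $X^{\bullet}$, and similarly with $W^{-k}$ in place of $Z^{-k}$. Restricting these identifications to the subcategory $\D\,(\Lambda)_{[-k,0]}$ and splicing them with the displayed isomorphism above, I obtain
$$\Hom_{\D\,(\Lambda)}(Z^{-k}[k],-\,[\ell])|_{\D\,(\Lambda)_{[-k,0]}}\simeq \Hom_{\D\,(\Lambda)}(W^{-k}[k],-\,[\ell])|_{\D\,(\Lambda)_{[-k,0]}}.$$
This is exactly the hypothesis of Lemma \ref{lema2} for the given $Z^{\bullet}$, $W^{\bullet}$, $k$, $\ell$ (whose assumption $0<k<\ell$ is ours), and the conditions $Z^{i},W^{i}\in\proj\,(\Lambda)$ for $i\in[-k+1,0]$ are likewise inherited from the hypothesis of the corollary. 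Applying Lemma \ref{lema2} therefore gives
$$\Hom_{\D\,(\Lambda)}(Z^{\bullet},-\,[\ell])|_{\D\,(\Lambda)_{[-k,0]}}\simeq \Hom_{\D\,(\Lambda)}(W^{\bullet},-\,[\ell])|_{\D\,(\Lambda)_{[-k,0]}},$$
which is the desired conclusion.

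There is essentially no hard step here; the only point requiring care is the index bookkeeping — checking that the Ext-degree $\ell-k$ of the hypothesis matches the shift produced by Lemma \ref{lema4}, that the restriction to $\D\,(\Lambda)_{[-k,0]}$ is respected on both sides of the shift identification, and that the numerical constraints $\ell-k>0$ and $0<k<\ell$ are available. Once these are in place the corollary is immediate.
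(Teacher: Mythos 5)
Your proof is correct and takes essentially the same route as the paper, whose own proof of Corollary~\ref{coro} is just the one-line citation of Lemmas~\ref{lema2} and~\ref{lema4}; your write-up merely makes explicit the substitution $\ell\mapsto\ell-k$ in Lemma~\ref{lema4} and the shift identification $\Hom_{\D\,(\Lambda)}(Z^{-k}[k],-[\ell])\simeq\Hom_{\D\,(\Lambda)}(Z^{-k},-[\ell-k])$ that the paper leaves implicit. You are also right that $W^{-n}$ in the statement is a typo for $W^{-k}$.
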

\begin{proof} The proof follows from Lemma \ref{lema2} and Lemma \ref{lema4}.
\end{proof}
\vspace{.3cm}

We recall now some definitions (see \cite{R}). A tilting complex over an artin algebra $A$ is a
complex $T^{\bullet} \in \K^{b}\,(\proj\,(A))$ such that $\Hom_{\K^{b}\,(\proj\,(A))}(T, T[n])=0$ for
all integers $n \neq 0$, and $\add\,(T^{\bullet})$, the full subcategory of direct summands of
finite direct sums of copies of $T^{\bullet}$, generates $\K^{b}\,(\proj\,(A))$ as a  triangulated
category. Two artin algebras $A$ and $B$ are called derived equivalent if $\D\,(A)$ and $\D\,(B)$ are equivalent as triangulated categories. For further properties of derived equivalent artin algebras, the reader can see in \cite{R}.
\

In all that follows, we shall consider the following situation: Let $A$ be an artin algebra,
$T^{\bullet} \in \K^{b}\,(\proj\,(A))$ a tilting complex, $B:=\End_{\D\,(A)}(T^{\bullet})^{op}$ and
$F: \D\,(B) \rightarrow \D\,(A)$ be an equivalence of triangulated categories such that $F(B)= T^{\bullet}$. The quasi-inverse of $F$ is denoted 
by $G:\D\,(A)\rightarrow \D\,(B).$ It is well known that $F$ induces equivalences $\D^{b}\,(B)\to \D^{b}\,(A)$ and 
$\K^{b}\,(\proj\,(B))\to \K^{b}\,(\proj\,(A));$ and furthermore, without loss of generality, it can be assumed that 
$T^{\bullet}\in \Com\,(\proj\,(A))_{[-n,0]}$ for some $n\geq 0.$
\

The following results are well known in the literature, for a proof the reader can see in
\cite{K,PX}.

\begin{lem}\cite{K,PX} \label{lema1} Let $B:=\End_{\D\,(A)}(T^{\bullet})^{op},$ where
$T^{\bullet}\in \Com\,(\proj\,(A))_{[-n,0]}$ is a tilting complex as above. Then, the following statements hold.
\begin{itemize}
\item[(a)] There is a tilting complex $Q^{\bullet} \in \Com\,(\proj\,(B))_{[0,n]}$ such that $G(A)\simeq Q^{\bullet}.$
\item[(b)] $H^{\bullet}(FM)\in\Com\,(A)_{[-n,0]}$ for any $M\in\modu\,(A).$
\item[(c)] $H^{\bullet}(GN)\in\Com\,(B)_{[0,n]}$ for any $N\in\modu\,(B).$
\end{itemize}
\end{lem}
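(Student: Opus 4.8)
\medskip

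The plan is to let part (a) carry the weight of the lemma: part (b) will follow from it by a short computation with Hom-complexes, and part (c) can be obtained directly, without (a) at all. Throughout I use the elementary identification $\Hom_{\D(\Lambda)}(\Lambda, Y^{\bullet}[j])\simeq H^{j}(Y^{\bullet})$, valid for any complex $Y^{\bullet}$ of $\Lambda$-modules because $\Lambda[0]$ is $K$-projective, together with the facts recalled just above the lemma: $F$ and $G$ are quasi-inverse triangle equivalences restricting to equivalences $\Kb(\proj(B))\to\Kb(\proj(A))$ and $\Kb(\proj(A))\to\Kb(\proj(B))$, and $F(B)=T^{\bullet}\in\Com(\proj(A))_{[-n,0]}$.

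For part (a), observe first that the stalk complex $A[0]$ is trivially a tilting complex over $A$, so $\add(A[0])$ generates $\Kb(\proj(A))$. Transporting along the equivalence $G\colon\Kb(\proj(A))\to\Kb(\proj(B))$, we get that $G(A)$ is isomorphic in $\D(B)$ to a bounded complex of finitely generated projective $B$-modules, that $\add(G(A))$ generates $\Kb(\proj(B))$, and that $\Hom_{\D(B)}(G(A),G(A)[m])\simeq\Hom_{\D(A)}(A,A[m])=0$ for $m\neq 0$; hence $G(A)$ is a tilting complex over $B$. Let $Q^{\bullet}$ be the minimal complex of finitely generated projectives representing $G(A)$, concentrated in degrees $[a,b]$ say. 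It remains to prove $[a,b]\subseteq[0,n]$, which will give $Q^{\bullet}\in\Com(\proj(B))_{[0,n]}$ as desired.

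The inequality $b\le n$ is read off the cohomology of $G(A)$. Using $F(B)=T^{\bullet}$, $FG\simeq\mathrm{id}$, and that $T^{\bullet}$ is a $K$-projective complex of projectives concentrated in degrees $[-n,0]$,
$$H^{j}(G(A))\;\simeq\;\Hom_{\D(B)}(B,G(A)[j])\;\simeq\;\Hom_{\D(A)}(T^{\bullet},A[j])\;\simeq\;H^{j}\!\big(\Hom_{A}^{\bullet}(T^{\bullet},A)\big),$$
and the Hom-complex $\Hom_{A}^{\bullet}(T^{\bullet},A)$ is concentrated in cohomological degrees $[0,n]$, since its degree-$p$ term is $\Hom_{A}(T^{-p},A)$, which vanishes unless $-p\in[-n,0]$. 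Thus $H^{j}(G(A))=0$ for $j>n$; as $Q^{\bullet}$ is minimal, this forces $b\le n$, for if $b>n$ then $H^{b}(Q^{\bullet})=0$ would make the top differential $Q^{b-1}\to Q^{b}$ surjective onto a nonzero projective, contradicting the fact that in a minimal complex every differential is a radical morphism. For $a\ge 0$ I would dualise. The contravariant functor $\Hom_{B}(-,B)\colon\proj(B)\to\proj(B^{op})$ carries minimal complexes to minimal complexes and reverses degrees, so $\Hom_{B}(Q^{\bullet},B)$ is a minimal bounded complex of projective right $B$-modules concentrated in degrees $[-b,-a]$, with cohomology
$$H^{j}\!\big(\Hom_{B}(Q^{\bullet},B)\big)\;\simeq\;\Hom_{\D(B)}(G(A),B[j])\;\simeq\;\Hom_{\D(A)}(A,T^{\bullet}[j])\;\simeq\;H^{j}(T^{\bullet}),$$
which vanishes for $j>0$. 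Applying the same top-degree argument to $\Hom_{B}(Q^{\bullet},B)$, whose top nonzero degree is $-a$, yields $-a\le 0$, i.e.\ $a\ge 0$, completing (a). Now part (c) is the direct analogue of the first computation: for $N\in\modu(A)$ one has $H^{j}(GN)\simeq\Hom_{\D(A)}(T^{\bullet},N[j])\simeq H^{j}(\Hom_{A}^{\bullet}(T^{\bullet},N))$, which is concentrated in degrees $[0,n]$ for the same reason, so $H^{\bullet}(GN)\in\Com(B)_{[0,n]}$. And part (b) uses (a): for $M\in\modu(B)$,
$$H^{j}(FM)\;\simeq\;\Hom_{\D(A)}(A,FM[j])\;\simeq\;\Hom_{\D(B)}(Q^{\bullet},M[j])\;\simeq\;H^{j}\!\big(\Hom_{B}^{\bullet}(Q^{\bullet},M)\big),$$
and since $Q^{\bullet}\in\Com(\proj(B))_{[0,n]}$ the complex $\Hom_{B}^{\bullet}(Q^{\bullet},M)$ is concentrated in degrees $[-n,0]$, so $H^{\bullet}(FM)\in\Com(A)_{[-n,0]}$.

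The one genuinely delicate point is the lower bound $a\ge 0$ in (a). Cohomology concentration alone only controls the minimal complex of $G(A)$ from above, via the observation that a radical morphism cannot be surjective onto a nonzero projective, applied at the top degree; at the bottom degree the cohomology of a minimal complex may vanish, so there is no symmetric argument available. The remedy is to pass to the $B$-linear dual $\Hom_{B}(-,B)$, which converts the bottom of $Q^{\bullet}$ into the top of the dual complex, and then to exploit the identification $FG(A)\simeq A$ together with $F(B)=T^{\bullet}$ in order to compute that dual's cohomology as $H^{\bullet}(T^{\bullet})$. The remaining ingredients (that $A[0]$ is a tilting complex, that the given equivalences restrict to the homotopy categories $\Kb(\proj(-))$, and that $\Hom_{B}(-,B)$ preserves minimality of complexes of projectives over an artin algebra) are standard, and I would simply cite them, cf.\ \cite{R,K,PX}.
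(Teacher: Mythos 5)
Your proof is correct, but note that there is no ``paper proof'' to compare it with: the paper states this lemma with a bare pointer to \cite{K,PX}, and what you wrote is essentially the standard argument from that literature (and from Rickard's Morita theory \cite{R}), made self-contained. The two halves of your part (a) are exactly the two standard ingredients: the cohomology bound $H^{j}(G(A))\simeq\Hom_{\D\,(A)}(T^{\bullet},A[j])\simeq H^{j}(\Hom_{A}^{\bullet}(T^{\bullet},A))$, which kills degrees outside $[0,n]$ and, by minimality (a radical differential cannot surject onto a nonzero projective), bounds the top of the minimal representative $Q^{\bullet}$; and the dualisation by $\Hom_{B}(-,B)$, with $H^{j}(\Hom_{B}(Q^{\bullet},B))\simeq\Hom_{\D\,(A)}(A,T^{\bullet}[j])\simeq H^{j}(T^{\bullet})$, which controls the bottom degree --- correctly identified by you as the only delicate point, since vanishing of cohomology alone does not bound a minimal complex from below. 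All the identifications are legitimate because $T^{\bullet}$, $Q^{\bullet}$ and stalk complexes of projectives are bounded complexes of projectives, hence the derived Hom's are computed by the naive Hom-complexes; the auxiliary facts you defer to (existence of minimal representatives in $\Kb\,(\proj\,(B))$, preservation of minimality under $\Hom_B(-,B)$, and transport of the generation and self-orthogonality conditions along the restricted equivalence $\Kb\,(\proj\,(A))\to\Kb\,(\proj\,(B))$) are indeed standard for artin algebras. Parts (b) and (c) then follow by the same one-line Hom-complex computation, (c) directly from $F(B)=T^{\bullet}$ and (b) from (a) via $Q^{\bullet}$, which is a clean way to organise it. One small remark: as printed, items (b) and (c) have the module categories interchanged ($FM$ only makes sense for $M\in\modu\,(B)$ and $GN$ for $N\in\modu\,(A)$, which is how the lemma is actually invoked in Lemmas \ref{morita} and \ref{bras}); you silently proved the corrected statement, which is the right reading.
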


\begin{lem} \label{morita} Let $B:=\End_{\D\,(A)}(T^{\bullet})^{op},$ where
$T^{\bullet}\in \Com\,(\proj\,(A))_{[-n,0]}$ is a tilting complex as above. Then, for any
$M\in\modu\,(B),$ the complex $FM$ is quasi-isomorphic to complexes $X^{\bullet}_M$ and $Y^{\bullet}_M,$ where
\begin{itemize}
\item[(a)] $X^{\bullet}_M \in \Com\,(A)_{[-n,0]}$ with $X_M ^i\in\proj\,(A)\;$ $\forall\,i\in[-n+1, 0];$ and
\item[(b)] $Y^{\bullet}_M \in \Com\,(A)_{[-n,0]}$ with $Y_M ^i\in\inj\,(A)\;$ $\forall\,i\in[-n, -1].$
\end{itemize}
Moreover, those quasi-isomorphisms, given above, induce natural isomorphisms in the derived category $\D\,(A).$
\end{lem}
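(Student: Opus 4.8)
The plan is to combine Lemma~\ref{lema1}(b) with the two standard facts that $\modu\,(A)$ has enough finitely generated projectives and injectives and that the good truncation functors on $\D\,(A)$ are functorial. To begin with, since $F$ restricts to an equivalence $\D^{b}\,(B)\to\D^{b}\,(A)$ and $M\in\modu\,(B)\subseteq\D^{b}\,(B)$, the object $FM$ lies in $\D^{b}\,(A)$; and by Lemma~\ref{lema1}(b) its cohomology vanishes outside $[-n,0]$, that is $H^{i}(FM)=0$ for $i\notin[-n,0]$.

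For part (a) I would use the well-known equivalence $\D^{-}\,(A)\simeq\Kame\,(\proj\,(A))$, whose quasi-inverse is a functor, to produce, naturally in $M$, a bounded-above complex $P^{\bullet}$ of finitely generated projective $A$-modules, with $P^{i}=0$ for $i>0$ (possible because $H^{i}(FM)=0$ for $i>0$), together with an isomorphism $P^{\bullet}\simeq FM$ in $\D\,(A)$. Since $H^{i}(P^{\bullet})=H^{i}(FM)=0$ for $i<-n$, the canonical epimorphism of complexes $P^{\bullet}\to\tau_{\geq -n}(P^{\bullet})$ is a quasi-isomorphism, so I set $X^{\bullet}_{M}:=\tau_{\geq -n}(P^{\bullet})$. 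By construction $X^{\bullet}_{M}\in\Com\,(A)_{[-n,0]}$, its terms in degrees $i\in[-n+1,0]$ are exactly the projectives $P^{i}$, and its degree $-n$ term is $\Coker(P^{-n-1}\to P^{-n})$, finitely generated as a quotient of $P^{-n}$; hence $FM\simeq P^{\bullet}\simeq X^{\bullet}_{M}$ in $\D\,(A)$.

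For part (b) I would argue dually, using the equivalence $\D^{+}\,(A)\simeq\Kama\,(\inj\,(A))$---or, equivalently, the functor $L$ of \cite{BM} already invoked in the proof of Lemma~\ref{lema4}---to replace $FM$, naturally in $M$, by a bounded-below complex $I^{\bullet}$ of finitely generated injective $A$-modules with $I^{i}=0$ for $i<-n$ and $I^{\bullet}\simeq FM$ in $\D\,(A)$. Since $H^{i}(I^{\bullet})=0$ for $i>0$, the canonical monomorphism $\tau_{\leq 0}(I^{\bullet})\to I^{\bullet}$ is a quasi-isomorphism, and $Y^{\bullet}_{M}:=\tau_{\leq 0}(I^{\bullet})$ lies in $\Com\,(A)_{[-n,0]}$ with $Y^{i}_{M}=I^{i}\in\inj\,(A)$ for $i\in[-n,-1]$ and $Y^{0}_{M}=\Ker(I^{0}\to I^{1})$ finitely generated because $A$ is noetherian.

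It then only remains to observe that every ingredient used---the truncation functors $\tau_{\geq -n}$ and $\tau_{\leq 0}$, the equivalences $\D^{-}\,(A)\simeq\Kame\,(\proj\,(A))$ and $\D^{+}\,(A)\simeq\Kama\,(\inj\,(A))$, and the functor $L$---is functorial in its argument, so the resulting isomorphisms $FM\simeq X^{\bullet}_{M}$ and $FM\simeq Y^{\bullet}_{M}$ in $\D\,(A)$ are natural in $M\in\modu\,(B)$. I do not expect a real difficulty here; the only points needing genuine care are the finite generation of the ``boundary'' terms $X^{-n}_{M}$ and $Y^{0}_{M}$ (which is where the artinian hypothesis on $A$ enters) and the insistence on functorial choices throughout, which is exactly what upgrades the pointwise quasi-isomorphisms to the asserted natural isomorphisms.
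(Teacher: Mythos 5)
Your argument is correct, and it reaches the same kind of representative as the paper (a bounded-above complex of finitely generated projectives, good-truncated at degree $-n$ using the cohomology bound $H^{i}(FM)=0$ for $i\notin[-n,0]$ from Lemma \ref{lema1}(b)), but it assembles it differently. The paper obtains its projective complex by transporting the minimal projective resolution $P^{\bullet}(M)$ of $M$ through $F$, using that $F$ restricts to an equivalence $\K^{-}(\proj\,(B))\to\K^{-}(\proj\,(A))$; it then truncates on both sides and must still check that the degree-zero term $\Ker\,(d^{0})$ of the truncation is projective, which it does by a splitting argument on the exact bounded-above tail $\Ker\,(d^{0})\to Z^{0}\to Z^{1}\to\cdots$. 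You instead resolve $FM$ directly over $A$ by a complex of finitely generated projectives concentrated in degrees $\leq 0$, so the degree-zero term is projective by construction and only the truncation at $-n$ is needed; the price is that you must justify that such a resolution can be chosen functorially, which is standard (any choice of resolutions yields the quasi-inverse of $\K^{-}(\proj\,(A))\to\D^{-}(A)$ up to natural isomorphism; alternatively, compose a fixed quasi-inverse with the good truncation at $0$ and reuse the paper's splitting argument for $\Ker\,(d^{0})$). For part (b), which the paper dispatches with ``similar'', your two routes (finitely generated injective coresolution vanishing below $-n$, or the functor $L$ of \cite{BM} applied to a representative in $\Com\,(A)_{[-n,0]}$) are both fine, the latter being exactly the tool the paper itself invokes in Lemma \ref{lema4}. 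One notational caution: the symbols $\tau_{\leq m}$ and $\tau_{\geq m}$ are defined in Section 4 as brutal truncations, while the truncations you use (and explicitly describe, with $\Coker$ and $\Ker$ boundary terms) are the good ones; with the paper's brutal $\tau_{\geq -n}$ your map $P^{\bullet}\to\tau_{\geq -n}(P^{\bullet})$ would be neither defined as stated nor a quasi-isomorphism, so the intended good truncation should be made explicit.
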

\begin{proof} (a) Consider the complex $P^{\bullet}(M)$ induced by the minimal projective resolution
of $M$. Since $F$ induces an equivalence $\K^{-}\,(\proj\,(B))\to \K^{-}\,(\proj\,(A)),$ it follows that $FP^{\bullet}(M) \in\K^{-}\,
(\proj\,(A)).$ By Lemma \ref{lema1} (b), we have that $H^{\bullet}(FP^{\bullet}(M))\simeq H^{\bullet}
(FM)\in\Com\,(A)_{[-n,0]}.$  Thus, the complex $Z^{\bullet}:=FP^{\bullet}(M)$ is
quasi-isomorphic to the following complex $C^{\bullet}\in\Com\,(A)_{[-n,0]},$ where
$$C^{\bullet}\;:\; \cdots\rightarrow \Coker\,(d^{-n-1})
\rightarrow  Z^{-n+1}\rightarrow  \cdots \rightarrow  Z^{-1}\rightarrow  \Ker\,(d^{0}) \rightarrow
\cdots.$$
\noindent We assert that $\Ker\,(d^{0})\in\proj\,(A).$ Indeed, since $H^{i}(Z^{\bullet})=0$ for $i > 0$, the complex
\begin{equation} \label{complex} \cdots\to 0 \rightarrow  \Ker\,(d^{0}) \rightarrow
 Z^{0} \rightarrow  Z^{1}\rightarrow Z^{2} \rightarrow \cdots
\end{equation}
\noindent  is exact. Moreover, the complex $Z^{\bullet}$ is bounded above and each $Z^{i}$ is
 projective, and so the complex (\ref{complex}) splits; proving that $\Ker\,(d^{0})\in\proj\,(A).$
 Therefore, the complex $X^{\bullet}_M:=C^{\bullet}$ satisfies the desired conditions.
\

(b) This can be proven in a similar way as we did in (a).
\end{proof}

\begin{lem} \label{bras} Let $B:=\End_{\D\,(A)}(T^{\bullet})^{op},$ where
$T^{\bullet}\in \Com\,(\proj\,(A))_{[-n,0]}$ is a tilting complex as above. Then, for any
$S\in\modu\,(A),$ the complex $GS$ is quasi-isomorphic to complexes $X^{\bullet}_S$ and $Y^{\bullet}_S,$ where
\begin{itemize}
\item[(a)] $X^{\bullet}_S \in \Com\,(B)_{[0,n]}$ with $X_S^i\in\proj\,(B)\;$ $\forall\,i\in[1,n];$ and
\item[(b)] $Y^{\bullet}_S \in \Com\,(B)_{[0,n]}$  with $Y_S^i\in\inj\,(B)\;$ $\forall\,i\in[0,n-1].$
\end{itemize}
Moreover, those quasi-isomorphisms induce natural isomorphisms in the derived category $\D\,(B).$
\end{lem}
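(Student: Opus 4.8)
The plan is to run the argument of Lemma \ref{morita} in the dual direction: one replaces $F$ by $G,$ the module category $\modu\,(B)$ by $\modu\,(A),$ Lemma \ref{lema1}(b) by Lemma \ref{lema1}(c), and the interval $[-n,0]$ by $[0,n]$ (this last replacement being forced by Lemma \ref{lema1}(a), which places $G(A)\simeq Q^{\bullet}$ in $\Com\,(\proj\,(B))_{[0,n]}$). The one substantive input is Lemma \ref{lema1}(c): for $S\in\modu\,(A)$ it gives $H^{\bullet}(GS)\in\Com\,(B)_{[0,n]},$ so that $GS\in\D^{b}\,(B)$ and $H^{i}(GS)=0$ for $i\notin[0,n].$

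For part (a) I would take a bounded-above projective resolution $Z^{\bullet}\in\K^{-}\,(\proj\,(B))$ of $GS$ (equivalently, push the minimal projective resolution of $S$ over $A$ through the equivalence $\K^{-}\,(\proj\,(A))\to\K^{-}\,(\proj\,(B))$ induced by $G,$ exactly as $F$ is used in Lemma \ref{morita}). Since $Z^{\bullet}\simeq GS$ and $H^{i}(Z^{\bullet})=0$ for $i\notin[0,n],$ the iterated good truncation
$$X^{\bullet}_{S}:=\tau_{\geq 0}\big(\tau_{\leq n}(Z^{\bullet})\big)\;:\quad 0\to \Coker\,(d_{Z}^{-1})\to Z^{1}\to\cdots\to Z^{n-1}\to \Ker\,(d_{Z}^{n})\to 0$$
lies in $\Com\,(B)_{[0,n]}$ and is quasi-isomorphic to $Z^{\bullet}\simeq GS.$ The terms $Z^{1},\dots,Z^{n-1}$ are projective by construction, and $\Ker\,(d_{Z}^{n})$ is projective because $0\to\Ker\,(d_{Z}^{n})\to Z^{n}\to Z^{n+1}\to\cdots$ is a finite exact complex of projectives (finite since $Z^{\bullet}$ is bounded above, exact since $H^{i}(Z^{\bullet})=0$ for $i>n$), hence split, just as in Lemma \ref{morita}. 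Thus $X^{\bullet}_{S}\in\Com\,(B)_{[0,n]}$ has projective terms in degrees $[1,n].$

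For part (b), since $H^{i}(GS)=0$ for $i<0,$ I would take a bounded-below injective resolution $W^{\bullet}\in\K^{+}\,(\inj\,(B))$ of $GS$ that is concentrated in degrees $\geq 0$ (or push the minimal injective co-resolution of $S$ over $A$ through the equivalence $\K^{+}\,(\inj\,(A))\to\K^{+}\,(\inj\,(B))$ induced by $G;$ since $G(A)\simeq Q^{\bullet}$ lives in non-negative degrees, the image stays in degrees $\geq 0$). Then $W^{\bullet}\simeq GS$ and $H^{i}(W^{\bullet})=0$ for $i>n,$ so the good truncation
$$Y^{\bullet}_{S}:=\tau_{\leq n}(W^{\bullet})\;:\quad 0\to W^{0}\to W^{1}\to\cdots\to W^{n-1}\to \Ker\,(d_{W}^{n})\to 0$$
lies in $\Com\,(B)_{[0,n]}$ and is quasi-isomorphic to $W^{\bullet}\simeq GS;$ its terms $W^{0},\dots,W^{n-1}$ are injective, so $Y^{\bullet}_{S}$ has injective terms in degrees $[0,n-1].$ The naturality in $S$ of both quasi-isomorphisms in $\D\,(B)$ is then automatic, because minimal (co)resolutions, the functor $G,$ and the truncation functors are functorial on the derived category.

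I do not expect a genuine obstacle here: the statement is the straightforward dual of Lemma \ref{morita}. The only points requiring attention are pinning down the exact homological positions of the projective (resp.\ injective) terms and the dimension-shifting step in (a). As a shortcut, one may also deduce the whole lemma from Lemma \ref{morita} directly: by Lemma \ref{lema1}(a) the shifted complex $Q^{\bullet}[n]\in\Com\,(\proj\,(B))_{[-n,0]}$ is a tilting complex over $B$ whose associated derived equivalence is $(-)[n]\circ G,$ so Lemma \ref{morita} applied over $B$ with this tilting complex produces, for each $S\in\modu\,(A),$ complexes in $\Com\,(B)_{[-n,0]}$ quasi-isomorphic to $(GS)[n]$ with projective terms in $[-n+1,0]$ and injective terms in $[-n,-1],$ and shifting them by $[-n]$ yields $X^{\bullet}_{S}$ and $Y^{\bullet}_{S}.$
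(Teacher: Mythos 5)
Your proof is correct and follows exactly the route the paper intends: the paper's own proof of Lemma \ref{bras} is just the remark that one repeats the argument of Lemma \ref{morita} with $G$, Lemma \ref{lema1}(c) and the interval $[0,n]$ in place of $F$, Lemma \ref{lema1}(b) and $[-n,0]$, which is precisely the dualization you carry out (and your shortcut via the tilting complex $Q^{\bullet}[n]\in\Com\,(\proj\,(B))_{[-n,0]}$ is also a legitimate way to deduce it from Lemma \ref{morita}). One small notational caution: the paper's $\tau_{\leq n}$, $\tau_{\geq n}$ denote the brutal truncations, whereas you use these symbols for the good truncations with $\Ker$ and $\Coker$ terms; since you display the resulting complexes explicitly, this causes no mathematical problem.
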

\begin{proof} It can be done, in an similar way, as we did in \ref{morita}.
\end{proof}

\begin{lem} \label{parte1} Let $B:=\End_{\D\,(A)}(T^{\bullet})^{op},$ where
$T^{\bullet}\in \Com\,(\proj\,(A))_{[-n,0]}$ is a tilting complex as above; and let $M, N \in\modu\,(B).$ If $\Ext_{B}^t (M,\, -\,)
\simeq \Ext_{B}^t(N, \, -\,)$ for all $t \geq \ell > 0$, then
$$\Hom_{\D\,(B)}(M,\, G(-\,[t]))|_{\modu\,(A)} \simeq \Hom_{\D\,(B)}(N, \, G(-\,[t]))|_{\modu\,(A)}\quad\forall\,t \geq n+\ell.$$
\end{lem}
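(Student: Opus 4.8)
The plan is to obtain the statement as a direct consequence of Lemma~\ref{lema4}, applied to the algebra $B$, once the degree shift coming from the length $n$ of the tilting complex has been absorbed. Since $t\geq n+\ell$ we have $t-n\geq\ell>0$, and so the assumption $\Ext_{B}^{s}(M,-)\simeq\Ext_{B}^{s}(N,-)$ for all $s\geq\ell$ gives, in particular, the isomorphism of functors $\Ext_{B}^{t-n}(M,-)\simeq\Ext_{B}^{t-n}(N,-)$. This lets me apply Lemma~\ref{lema4} to the algebra $B$, to the modules $M,N$, with exponent $t-n>0$ and with $k:=n\geq 0$, which yields a natural isomorphism
$$\Hom_{\D\,(B)}(M,-\,[t-n])|_{\D\,(B)_{[-n,0]}}\;\simeq\;\Hom_{\D\,(B)}(N,-\,[t-n])|_{\D\,(B)_{[-n,0]}}.$$

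Next I would feed the objects $G(S)[n]$, for $S\in\modu\,(A)$, into this isomorphism. By Lemma~\ref{bras} (or by Lemma~\ref{lema1}~(c)), for each $S\in\modu\,(A)$ the complex $G(S)\in\D\,(B)$ is isomorphic, naturally in $S$, to a complex lying in $\Com\,(B)_{[0,n]}$; hence $G(S)[n]$ is isomorphic in $\D\,(B)$ to an object of $\D\,(B)_{[-n,0]}$, and $S\mapsto G(S)[n]$ defines a functor $\modu\,(A)\to\D\,(B)_{[-n,0]}$. Since $G$ is triangulated, $G(S[t])=G(S)[t]=\bigl(G(S)[n]\bigr)[t-n]$, so evaluating the displayed isomorphism at $G(S)[n]$ gives
$$\Hom_{\D\,(B)}(M,\,G(S[t]))\;\simeq\;\Hom_{\D\,(B)}(N,\,G(S[t]))\qquad\text{for every }S\in\modu\,(A).$$
This evaluation is legitimate because $\Hom_{\D\,(B)}(M,-)$ and $\Hom_{\D\,(B)}(N,-)$ are invariant under isomorphism, and the resulting family of isomorphisms is natural in $S$, since every ingredient --- the functor $G$, the shift functor $[n]$, the natural isomorphism of Lemma~\ref{bras}, and the natural isomorphism produced by Lemma~\ref{lema4} --- is natural. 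This is exactly the asserted identity $\Hom_{\D\,(B)}(M,\,G(-\,[t]))|_{\modu\,(A)}\simeq\Hom_{\D\,(B)}(N,\,G(-\,[t]))|_{\modu\,(A)}$ for all $t\geq n+\ell$.

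I do not expect a serious obstacle: the whole content is this shift-and-restrict step on top of Lemma~\ref{lema4}. The one point requiring care is the bookkeeping of degrees --- the cohomology of $G(S)$ sits in degrees $[0,n]$ rather than $[-n,0]$, so one must pass to $G(S)[n]$ before invoking Lemma~\ref{lema4}, and one must know that this shifted object indeed lies (up to isomorphism in $\D\,(B)$) in $\D\,(B)_{[-n,0]}$, so that it is an admissible argument for the restricted functor; this is precisely what Lemma~\ref{bras} guarantees.
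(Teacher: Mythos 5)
Your argument is correct, and at bottom it is the same mechanism as the paper's: the paper proves the lemma by hand, replacing $GS$ by the complex $Y^{\bullet}_S\in\Com\,(B)_{[0,n]}$ of Lemma \ref{bras}(b) (injective terms in degrees $[0,n-1]$), applying $(M,-)$ and $(N,-)$ to the truncation triangle, killing the Hom's into $\tau_{\leq n-1}(Y^{\bullet}_S)$ by Lemma \ref{projetivo}, and thus reducing to $\Ext_B^{t-n}(M,Y^n_S)\simeq\Ext_B^{t-n}(N,Y^n_S)$, which is the hypothesis at degree $t-n\geq\ell$. You obtain exactly this reduction by quoting Lemma \ref{lema4} over the algebra $B$ with window $k=n$ and shift exponent $t-n$, which is legitimate since the proof of that lemma is precisely the truncation-triangle argument (its injective replacement via \cite{BM} plays the role of Lemma \ref{bras}(b)); the only extra ingredient is your shift bookkeeping $G(S)\simeq Y^{\bullet}_S$, hence $Y^{\bullet}_S[n]\in\D\,(B)_{[-n,0]}$ and $G(S[t])\simeq (Y^{\bullet}_S[n])[t-n]$. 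You also correctly flag the one point needing care: $G(S)[n]$ itself need not literally lie in $\D\,(B)_{[-n,0]}$, so one evaluates the isomorphism of Lemma \ref{lema4} at $Y^{\bullet}_S[n]$ and transports along the natural isomorphism of Lemma \ref{bras}, exactly as the paper does implicitly. So the two proofs differ only in packaging: yours reuses Lemma \ref{lema4} and avoids repeating the triangle computation, while the paper's inline version keeps the argument self-contained in degrees $[0,n]$ without shifting.
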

\begin{proof}Let $S\in \modu\,(A).$ Thus, by Lemma \ref{bras} (b), we have the natural isomorphism $GS\stackrel{\sim}{\to} Y^{\bullet}_S$ in 
$\D\,(B),$ where $Y^{\bullet}_S \in \Com\,(B)_{[0,n]}$ and $Y^i_S\in\inj\,(B)\;$ $\forall\,i\in[0,n-1].$ Applying the functors $(M, - )$ and 
$(N, - )$ to the distinguished triangle
$$Y^{n}_S[-n+t] \rightarrow Y^{\bullet}_S [t]\rightarrow \tau_{\leq n-1}(Y^{\bullet}_S) [t] \rightarrow Y^{n}_S[-n+t+1],$$
\noindent we obtain, for any  $t \geq n+\ell,$ the following exact sequences
$$(M, \tau_{\leq n-1}(Y^{\bullet}_S)[t-1]) \rightarrow(M, Y^{n}_S[t-n])
\rightarrow (M, Y^{\bullet}_S[t]) \rightarrow (M, \tau_{\leq n-1}(Y^{\bullet}_S)[t]),$$
$$(N, \tau_{\leq n-1}(Y^{\bullet}_S)[t-1]) \rightarrow(N, Y^{n}_S[t-n])
\rightarrow (N, Y^{\bullet}_S[t]) \rightarrow (N, \tau_{\leq n-1}(Y^{\bullet}_S)[t]).$$
\noindent By Lemma \ref{projetivo}, we have that the following equalities hold 
 $(M, \tau_{\leq n-1}(Y^{\bullet}_S)[t-1])=(M, \tau_{\leq n-1}(Y^{\bullet}_S)[t])=
(N, \tau_{\leq n-1}(Y^{\bullet}_S)[t-1])=(N, \tau_{\leq n-1}(Y^{\bullet}_S)[t])= 0,$ and thus we get that
$(M, Y^{n}_S[t-n]) \simeq (M, Y^{\bullet}_S[t])$ and $(N, Y^{n}_S[t-n]) \simeq (N, Y^{\bullet}_S[t])$ for $t \geq n+\ell.$
Therefore $(M,\, G(-\,[t]))|_{\modu\,(A)} \simeq (N, \, G(-\,[t]))|_{\modu\,(A)}$ for all  $t \geq n+\ell$.
\end{proof}

For an artin algebra $\Lambda,$ we denote the $\phi$ dimension of a $\Lambda$-module $M$ by $\phi_{\Lambda}(M).$

\begin{teo}\label{tititi} Let $A$ and $B$ be artin algebras, which  are derived equivalent. Then, $\Fidim\,(A)< \infty$ if and only if 
$\Fidim\,(B) < \infty$.
More precisely, if $T^{\bullet}$ is a tilting complex over $A$ with $n$ non-zero terms and such that $B\simeq\End_{\D\,(A)}(T^{\bullet})$, then
$$\Fidim\,(A)-n\leq \Fidim\,(B)\leq \Fidim\,(A)+n.$$
\end{teo}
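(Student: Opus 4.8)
The plan is to prove the two inequalities $\Fidim\,(B)\leq \Fidim\,(A)+n$ and $\Fidim\,(A)-n\leq \Fidim\,(B)$ by a symmetric argument, using Theorem~\ref{FidimExt} to translate the $\phi$-dimension into statements about $d$-Divisions, i.e.\ about isomorphisms of the bi-functors $\Ext^{i}_{\Lambda}(-,-)$. By symmetry of the derived equivalence (swap the roles of $A$ and $B$, and of $F$ and $G$), it suffices to prove $\Fidim\,(B)\leq \Fidim\,(A)+n$; the other inequality follows by applying the first one to the tilting complex $Q^{\bullet}\in\Com\,(\proj\,(B))_{[0,n]}$ of Lemma~\ref{lema1}(a), which also has $n$ non-zero terms, and using $A\simeq\End_{\D\,(B)}(Q^{\bullet})$.

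First I would fix $M\in\modu\,(B)$ and suppose, for contradiction, that $\phi_{B}(M) > \Fidim\,(A)+n$. By Theorem~\ref{FidimExt} this gives a $d$-Division $(X,Y)$ of $M$ with $d=\phi_{B}(M)$: a pair of objects in $\add\,(M)$ with $\add\,(X)\cap\add\,(Y)=\{0\}$, $\Ext_{B}^{d}(X,-)\not\simeq\Ext_{B}^{d}(Y,-)$, but $\Ext_{B}^{d+1}(X,-)\simeq\Ext_{B}^{d+1}(Y,-)$, and in fact, via Corollary~\ref{coroteogrupos}, $\Ext_{B}^{i}(X,-)\simeq\Ext_{B}^{i}(Y,-)$ for all $i\geq d+1$. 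The idea now is to transport the pair $(X,Y)$ across the equivalence to a pair of $A$-modules and produce from it an $(d-n)$-Division, or at least a $d'$-Division with $d' > \Fidim\,(A)$, contradicting Theorem~\ref{FidimExt} applied over $A$. Concretely, one applies $F$ to obtain complexes $FX, FY$, replaces them (Lemma~\ref{morita}) by quasi-isomorphic complexes $X^{\bullet}, Y^{\bullet}\in\Com\,(A)_{[-n,0]}$ with projective terms in degrees $[-n+1,0]$, and passes to the leftmost terms $X^{-n}, Y^{-n}\in\modu\,(A)$. The chain Lemma~\ref{parte1} $\to$ Lemma~\ref{lema3} is exactly designed to convert the Ext-isomorphisms $\Ext_{B}^{i}(X,-)\simeq\Ext_{B}^{i}(Y,-)$ ($i\geq d+1$) into $\Hom_{\D\,(A)}(X^{\bullet},-[t])|_{\modu\,(A)}\simeq\Hom_{\D\,(A)}(Y^{\bullet},-[t])|_{\modu\,(A)}$ for $t\geq n+(d+1)-n=d+1$, hence into $\Ext_{A}^{t}(X^{-n},-)\simeq\Ext_{A}^{t}(Y^{-n},-)$ for $t\geq d+1$. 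Conversely, using Corollary~\ref{coro} together with the non-isomorphism $\Ext_{B}^{d}(X,-)\not\simeq\Ext_{B}^{d}(Y,-)$, one shows that $\Ext_{A}^{d-n}(X^{-n},-)\not\simeq\Ext_{A}^{d-n}(Y^{-n},-)$ — otherwise the chain of implications would force the functors on the $B$-side to be isomorphic in degree $d$ as well, a contradiction. Finally, after splitting off any common summands of $X^{-n}$ and $Y^{-n}$ so that condition (a) of a Division holds (this does not affect the relevant Ext-functors, by Proposition~\ref{eqExt} / Theorem~\ref{teogrupos}), the pair obtained from $(X^{-n}, Y^{-n})$ is a $(d-n)$-Division of $M':=X^{-n}\oplus Y^{-n}\in\modu\,(A)$, so $\Fidim\,(A)\geq\phi_{A}(M')\geq d-n > \Fidim\,(A)$, the desired contradiction.

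The main obstacle I expect is the bookkeeping of the degree shifts and the careful handling of the two \emph{directions} of the $d$-Division conditions: the "vanishing in high degrees" direction flows cleanly through Lemmas~\ref{parte1} and~\ref{lema3} (and their symmetric analogues on the $B$-side), but the "non-vanishing in degree $d$" direction is more delicate, since Corollary~\ref{coro} only gives an implication in one direction and one must argue contrapositively, keeping track of which functors are restricted to $\modu\,(A)$ versus to $\D\,(A)_{[-n,0]}$ and making sure the restriction still detects the relevant non-isomorphism. A second, more technical point is that the inequality $0<k<\ell$ hypothesis in Lemma~\ref{lema2} and Corollary~\ref{coro} requires $d-n>0$, i.e.\ $d>n$; but this is automatic here since we assumed $d=\phi_{B}(M)>\Fidim\,(A)+n\geq n$. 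A third point is verifying that the leftmost terms $X^{-n}, Y^{-n}$ are genuinely non-projective and that passing to them does not collapse the distinction — this is where Proposition~\ref{eqExt}, identifying $[\,-\,]$ in $\K(A)$ with the class of the Ext-functor, does the necessary work, and where one uses that $\Ext_{A}^{t}(X^{-n},-)=\Ext_{A}^{t-1}(\Omega X^{-n},-)$ together with the fact that $\tau_{\leq -1}X^{\bullet}$ is built from projectives, so that the class $[X^{-n}]$ in $\K(A)$ carries precisely the same information as the tail of $\Ext_{A}^{\bullet}(X,-)$ after the equivalence. Once these compatibilities are nailed down, the two inequalities follow and the theorem is proved.
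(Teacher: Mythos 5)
Your proposal follows essentially the same route as the paper's proof: extract a $d$-Division over $B$ via Theorem~\ref{FidimExt}, transport it through $F$ using Lemma~\ref{parte1}, Lemma~\ref{morita}, Lemma~\ref{lema3} and (contrapositively) Corollary~\ref{coro} to the leftmost terms $Z^{-n},W^{-n}$, strip common summands, and conclude $\phi_A(Z^{-n}\oplus W^{-n})\geq d-n$, with the reverse inequality by symmetry. The only slips are harmless: the Ext-isomorphisms on the $A$-side hold for $t\geq n+d+1$ (not $t\geq d+1$), so the pair is a $d'$-Division for some $d-n\leq d'\leq d+n$ rather than literally a $(d-n)$-Division, which still gives $\phi_A\geq d-n$ exactly as in the paper.
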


\begin{proof} Assume that $\Fidim\,(A)= r < \infty $. If $\Fidim\,(B)\leq n$ then $\Fidim\,(B) \leq \Fidim\,(A)+ n $. Suppose now that there 
exists $J \in \modu\,(B)$ such that $\phi_{B}(J)=d>n$. It follows from Theorem \ref{FidimExt} that $J= \hat{M} \oplus \hat{N}$ and there exist 
$M \in \add\,(\hat{M})$ and $N\in\add\,(\hat{N})$ such that
\begin{equation}\label{eq:ext}
\left\{\begin{array}{cccc}
\Ext^{d}_{B}(M,-) & \not\simeq & \Ext^{d}_{B}(N, -),& \\
\Ext^{t}_{B}(M, -) & \simeq & \Ext^{t}_{B}(N, -)& \text{ for }\;t \geq d+1.
\end{array} \right. \end{equation}
\noindent Using Lemma \ref{parte1}, for the second equation of (\ref{eq:ext}), we have

\begin{equation}\label{eq:equG}
\left\{ \begin{array}{cccc}
\Hom_{\D\,(B)}(M, -[d])|_{\modu\,(B)} & \not\simeq & \Hom_{\D\,(B)}(N, -[d])|_{\modu\,(B)},& \\
\Hom_{\D\,(B)}(M, G( -[t]))|_{\modu\,(A)} & \simeq & \Hom_{\D\,(B)}(N, G( -[t]))|_{\modu\,(A)}&
\end{array} \right. \end{equation}
\noindent for all $t \geq n+d+1$.

\noindent Applying the equivalence $F$ in (\ref{eq:equG}), we get
\begin{equation}\label{eq:eq2}
\left \{\begin{array}{ccc}
\Hom_{\D\,(A)}(FM, F(-[d]))|_{\modu\,(A)} & \not\simeq & \Hom_{\D\,(A)}(FN, F(-[d]))|_{\modu\,(A)}, \\
\Hom_{\D\,(A)}(FM, -[t])|_{\modu\,(A)} & \simeq & \Hom_{\D\,(A)}(FN, -[t])|_{\modu\,(A)}
\end{array} \right. \end{equation}
\noindent for all $t \geq n+d+1$.

By Lemma \ref{morita}, the complexes $FM$ and $FN$ can be replaced, respectively, by $Z^{\bullet}, W^{\bullet}\in\Com\,(A)_{[-n,0]}$ such 
that  $Z^{i},W^{i}\in\proj\,(A)\;$ $\forall\,i\in [-n+1, 0]$. It follows, from Lemma  \ref{lema3}, that the second
item of (\ref{eq:eq2}) is equivalent to
\begin{equation}\label{eq:equacao3}
\Ext_{A}^t(Z^{-n}, -) \simeq  \Ext_{A}^t(W^{-n}, -)
\end{equation}
\noindent  for all $t \geq n+d+1$.

By Corollary \ref{coro}, the first  item of (\ref{eq:eq2}) give us that
\begin{equation}\label{eq:equac4}
\Ext^{d-n}_{A}(Z^{-n}, -)  \not\simeq  \Ext^{d-n}_{A}(W^{-n}, -).
\end{equation}

%
In particular, we get from (\ref{eq:equac4}) that $Z^{-n}\not\simeq W^{-n}.$ We can assume that 
$\add\,(Z^{-n})$ and $\add\,(W^{-n})$ have trivial intersection because, otherwise, we can decompose $Z^{-n}$ and $W^{-n}$ as direct sum of 
indecomposables and withdraw from each one the common factors. The modules obtained satisfy (\ref{eq:equacao3}) and  (\ref{eq:equac4}), and 
their additive closure has trivial intersection.
\

Let $L:= Z^{-n} \oplus W^{-n}$. It follows from Theorem \ref{FidimExt}, (\ref{eq:equacao3}) and  (\ref{eq:equac4}) that 
$ d-n \leq \phi_{A} (L) \leq d+n.$ Since $\Fidim\,(A)=r.$ we have $d \leq n+r$ and hence $\Fidim\,(B)\leq n+ \Fidim\,(A)$. Analogously, 
it can be shown that $\Fidim\,(A)\leq  \Fidim\,(B) +n.$
\end{proof}

As an application of the main result, we get the following generalisation of the classic Bongartz's 
result \cite[Corollary 1]{B}. In order to do that, we recall firstly, the notion of tilting module.  
Following Y. Miyashita in \cite{M}, it is said that an $A$-module  $T\in\modu\,(A)$ is a tilting module, if $T$ satisfies the following properties: (a) $\pd\,T$ is finite, (b) $\Ext_A^i(T,T)=0$ for any $i>0$ and (c) there is an exact sequence $0\to {}_AA\to T_0\to T_1\to\cdots\to T_m\to 0$ 
in $\modu\,(A),$ with $T_i\in\add\,(T)$ for any $0\leq i\leq m.$

\begin{cor}\label{coroPdT} Let $A$ be an artin algebra, and let $T\in\modu\,(A)$ be a tilting $A$-module. Then, for the artin algebra $B:=\End_A(T)^{op},$ we have that  
$$\Fidim\,(A)-\pd\,T\leq \Fidim\,(B)\leq \Fidim\,(A)+\pd\,T.$$
\end{cor}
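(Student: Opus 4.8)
The plan is to derive Corollary \ref{coroPdT} as the special case of Theorem \ref{tititi} in which the tilting complex is the stalk complex of a tilting module, once we observe that such a stalk complex has the right shape.

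First I would recall that a tilting module $T\in\modu\,(A)$, viewed as a stalk complex $T[0]$ concentrated in degree zero, is a tilting complex in the sense used in Section 4: indeed, since $\pd\,T=:p<\infty$, the minimal projective resolution of $T$ yields a bounded complex $P^{\bullet}\in\Com\,(\proj\,(A))$ quasi-isomorphic to $T[0]$, with $P^{\bullet}\in\Com\,(\proj\,(A))_{[-p,0]}$ after a shift; the condition $\Ext_A^i(T,T)=0$ for $i>0$ together with $\Hom_A(T,T)$ being the degree-zero endomorphisms translates into $\Hom_{\K^b\,(\proj\,(A))}(P^{\bullet},P^{\bullet}[i])=0$ for $i\neq 0$; and the existence of the coresolution $0\to {}_AA\to T_0\to\cdots\to T_m\to 0$ with $T_i\in\add\,(T)$ shows that $\add\,(T^{\bullet})$ generates $\K^b\,(\proj\,(A))$ as a triangulated category. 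Moreover $B=\End_A(T)^{op}\simeq\End_{\D\,(A)}(P^{\bullet})^{op}$, so $A$ and $B$ are derived equivalent via this tilting complex.

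Then I would simply apply Theorem \ref{tititi} with $T^{\bullet}=P^{\bullet}$. The number $n$ appearing in that theorem is the number of non-zero terms of the tilting complex, and for the minimal projective resolution of a module of projective dimension $p$ this number is at most $p+1$. Strictly, one wants the bound with $\pd\,T$ rather than $\pd\,T+1$; I would therefore use that in Theorem \ref{tititi} what really enters, through Lemmas \ref{morita}, \ref{bras}, \ref{parte1}, \ref{lema3}, \ref{coro}, is the length of the interval $[-n,0]$, i.e.\ the number $n$ of \emph{shifts}, which for a complex concentrated in degrees $[-p,0]$ is exactly $p=\pd\,T$. So the inequalities of Theorem \ref{tititi} read $\Fidim\,(A)-\pd\,T\leq\Fidim\,(B)\leq\Fidim\,(A)+\pd\,T$, which is the assertion.

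The main obstacle, and the only point requiring genuine care, is the bookkeeping of the index $n$: one must make sure that the relevant quantity throughout Section 4 is the number of shifts (the length of the interval on which the tilting complex is concentrated) and not the count of non-zero terms, so that a tilting module of projective dimension $p$ contributes $p$ and not $p+1$. Everything else — verifying that $T[0]$ is a tilting complex and that its derived endomorphism algebra is $\End_A(T)^{op}$ — is standard tilting theory (see \cite{R}, \cite{M}) and needs only to be cited.
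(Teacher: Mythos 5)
Your proposal is correct and follows essentially the same route as the paper: realize $T$ by its minimal projective resolution $P^{\bullet}(T)\in\Com\,(\proj\,(A))_{[-\pd T,\,0]}$, note $B\simeq\End_{\D\,(A)}(P^{\bullet}(T))^{op}$, and apply Theorem \ref{tititi}. Your bookkeeping remark is exactly the point the paper also relies on: throughout Section 4 the parameter $n$ is the length of the concentration interval $[-n,0]$ (so $n=\pd\,T$ here), not the count of non-zero terms, which is how the paper's proof reads ``in this case $n$ is $\pd\,T$.''
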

\begin{proof} Let $m:=\pd\,T.$ Then, the minimal projective resolution of $T$ is as follows $0\to P_m\to\cdots\to P_1\to P_0\to T\to 0.$ So, the 
complex $$P^\bullet(T)\;:\;\cdots\to 0\to P_m\to\cdots\to P_1\to P_0\to 0\to\cdots$$ is a tilting complex in $\D\,(A)$ and also 
$B\simeq \End_{\D\,(A)}(P^\bullet(T))^{op}.$ Thus, by \ref{tititi} we get the result, since in this case $n$ is $\pd\,T.$ 
\end{proof}

\begin{remark} Let $A$ be an artin algebra. In \cite[Corollary 1]{B}, Bongartz consider a classical tilting module, that is, a tilting module $T\in\modu\,(A)$ such that $\pd\,T\leq 1.$ The original Bongartz's result says that $\gldim\,(B)\leq \gldim\,(A)+1,$ where $B:=\End_A(T)^{op}.$
\end{remark}

Finally, as another application of the main theorem, we get the following result for one-point 
extension algebras.

\begin{cor} Let $A$ and $B$ be two finite-dimensional $k$-algebras, $M \in \modu\,(A)$ and $N \in \modu\,(B)$. Let $A[M]$ and 
$B[N]$ be the respective one-point extensions. If $A$ and $B$ are derived-equivalent, then the finiteness of the $\phi$-dimension of one of 
the algebras $A, B, A[M]$ and $B[N]$ implies that all of them have finite $\phi$-dimension.
\end{cor}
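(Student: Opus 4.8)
The plan is to reduce the statement to two facts: the derived invariance of the finiteness of the $\phi$-dimension, proved in Theorem \ref{tititi}, and a comparison between the $\phi$-dimension of a finite-dimensional algebra and that of an arbitrary one-point extension of it. I will show that the four conditions $\Fidim\,(A)<\infty$, $\Fidim\,(B)<\infty$, $\Fidim\,(A[M])<\infty$, $\Fidim\,(B[N])<\infty$ are pairwise equivalent. The equivalence of the first two is precisely Theorem \ref{tititi}, since $A$ and $B$ are derived equivalent. Hence it suffices to prove that for every finite-dimensional $k$-algebra $\Lambda$ and every $L\in\modu\,(\Lambda)$,
\[
\Fidim\,(\Lambda)\ \leq\ \Fidim\,(\Lambda[L])\ \leq\ \Fidim\,(\Lambda)+1 ,
\]
and then to apply this with $(\Lambda,L)=(A,M)$ and with $(\Lambda,L)=(B,N)$.

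For the comparison I would use the standard description of $\modu\,(\Lambda[L])$: an object is a triple $(X,V,f)$ with $X\in\modu\,(\Lambda)$, $V$ a finite-dimensional $k$-vector space and $f\colon L\otimes_k V\to X$ a $\Lambda$-linear map. Let $j\colon\modu\,(\Lambda)\to\modu\,(\Lambda[L])$, $X\mapsto(X,0,0)$, be the exact fully faithful functor attached to the idempotent $e$ with $e\Lambda[L]e=\Lambda$. It preserves and reflects projectivity, preserves indecomposability, and satisfies $\Omega_{\Lambda[L]}\,j\simeq j\,\Omega_{\Lambda}$, because the projective cover of $(X,0,0)$ over $\Lambda[L]$ is $(P_0^{\Lambda}(X),0,0)$. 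Therefore $j$ induces an injective homomorphism $\K(\Lambda)\to\K(\Lambda[L])$ commuting with $\Omega$ and carrying $\langle X\rangle$ onto $\langle jX\rangle$; comparing the ranks of the iterated $\Omega$-images gives $\phi_{\Lambda[L]}(jX)=\phi_\Lambda(X)$ for every $X$, and passing to suprema yields $\Fidim\,(\Lambda)\leq\Fidim\,(\Lambda[L])$. For the reverse inequality the key point is that $\Omega_{\Lambda[L]}(Z)$ always lies in the image of $j$: writing $Z=(X,V,f)$, one checks that $\rad_{\Lambda[L]}(Z)=(\rad_\Lambda X+\Ima(f),0,0)$, so the projective cover of $Z$ is $(P_0^{\Lambda}(X/\Ima(f)),0,0)\oplus Q^{\dim_k V}$, where $Q=(L,k,\id)$ is the projective cover of the simple module at the new vertex; inspecting the kernel of $P_0(Z)\to Z$ then shows that its $k$-component is zero, i.e. $\Omega_{\Lambda[L]}(Z)=jX'$ for some $X'\in\modu\,(\Lambda)$. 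Hence, using Lemma \ref{itphi}(f) and the previous step,
\[
\phi_{\Lambda[L]}(Z)\ \leq\ \phi_{\Lambda[L]}(\Omega_{\Lambda[L]}(Z))+1\ =\ \phi_{\Lambda[L]}(jX')+1\ =\ \phi_\Lambda(X')+1\ \leq\ \Fidim\,(\Lambda)+1 ,
\]
and the supremum over $Z$ gives $\Fidim\,(\Lambda[L])\leq\Fidim\,(\Lambda)+1$.

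Combining these two displayed inequalities with Theorem \ref{tititi} finishes the proof: $\Fidim\,(A)<\infty\Leftrightarrow\Fidim\,(A[M])<\infty$, $\Fidim\,(B)<\infty\Leftrightarrow\Fidim\,(B[N])<\infty$ and $\Fidim\,(A)<\infty\Leftrightarrow\Fidim\,(B)<\infty$, so the four conditions are equivalent. I expect the only non-formal step to be the syzygy computation over $\Lambda[L]$, namely the verification that the kernel of the projective cover of an arbitrary triple $(X,V,f)$ already has trivial component at the new vertex; once this is in place, all the $\phi$-dimension estimates are immediate from the properties of $\phi$ listed in Lemma \ref{itphi} together with Theorem \ref{tititi}. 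Observe that the derived equivalence between $A$ and $B$ is used only through Theorem \ref{tititi}, and the modules $M$ and $N$ otherwise play no role.
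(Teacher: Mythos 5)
Your proof is correct and follows essentially the same route as the paper: both rest on the observation that the syzygy over the one-point extension of any module lands in $\modu$ of the base algebra, combined with Lemma \ref{itphi}(f), the inclusion $\modu\,(A)\subseteq\modu\,(A[M])$, and Theorem \ref{tititi}. You merely make explicit the details (the projective-cover computation for a triple $(X,V,f)$ and the compatibility of $\phi$ with the embedding) that the paper's proof leaves implicit, and you extract the sharper bound $\Fidim\,(\Lambda)\leq\Fidim\,(\Lambda[L])\leq\Fidim\,(\Lambda)+1$ along the way.
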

\begin{proof} Let $X \in \modu\,(A[M])$. We know by Lemma \ref{itphi} that $\phi(X) \leq 1+ \phi(\Omega X)$. Since $\Omega X$ can be seen as 
an $A$-modulo, it follows that $\Fidim\,(A)< \infty$ implies $\Fidim\,(A[M])< \infty$. Now, the fact that $\modu\,(A)\subseteq \modu\,(A[M])$ 
gives us the other implication. From this fact and Theorem \ref{tititi}, we get that the following equivalences hold: 
$\Fidim\,(A[M])< \infty \Leftrightarrow \Fidim\,(A)< \infty \Leftrightarrow  \Fidim\,(B)< \infty \Leftrightarrow \Fidim\,(B[M])< \infty$.
\end{proof}


\vskip3mm \noindent S\^{o}nia Maria Fernandes:\\ Universidade Federal de Vi\c{c}osa,\\
Departamento de Matem\'atica. CP 36570000, Vi\c{c}osa, MG, Brasil.\\
{\tt somari@ufv.br}

\vskip3mm \noindent Marcelo Lanzilotta:\\ Instituto de Matem\'atica y Estad\'{i}stica Rafael Laguardia,\\
J. Herrera y Reissig 565, Facultad de Ingenier\'{i}a, Universidad de la Rep\'ublica. CP 11300, Montevideo, URUGUAY.\\
{\tt marclan@fing.edu.uy}

\vskip3mm \noindent Octavio Mendoza Hern\'andez:\\ Instituto de Matem\'aticas, Universidad Nacional Aut\'onoma de M\'exico.\\
Circuito Exterior, Ciudad Universitaria, C.P. 04510, M\'exico, D.F. M\'EXICO.\\
{\tt omendoza@matem.unam.mx}

\end{document}